\newtheorem{theorem}{Theorem}
\newtheorem{corollary}{Corollary}
\newtheorem{definition}{Definition}
\newtheorem{lemma}{Lemma}
\newcommand{\pp}{\mathbb P}
\definecolor{jens}{rgb}{0,0.4,0.8}
\def\bR{\mathbb R}
\def\bE{\mathbb E}
\newtheorem{Theorem}{Theorem}
\newtheorem{Assumption}[Theorem]{Assumption}
\def\argmin{\mathop{\rm arg\,min}}
\def\rank{{\rm rank}}
\def\Var{\mathop{\rm Var}\nolimits}
\def\ba{\mathbf{a}}
\begin{document}
\title{Constructing confidence sets for the matrix completion problem}

\maketitle

\author{\begin{center}
		Alexandra Carpentier, \textit{ Universit\"at Potsdam\footnote{Institut f\"ur Mathematik,  carpentier@maths.uni-potsdam.de}}\\ \vskip 0.25 cm Olga Klopp, \textit{ University Paris Nanterre\footnote{MODAL'X,  kloppolga@math.cnrs.fr}}\\ \vskip 0.25 cm Matthias L\"offler,  \textit{ University of Cambridge\footnote{Statistical Laboratory, 
				Centre for Mathematical Sciences, m.loffler@statslab.cam.ac.uk}}\end{center} }

\begin{abstract}
	In the present note we consider the problem of constructing honest and adaptive confidence sets 
	for the matrix completion problem. For the Bernoulli model  with known variance of the noise we provide a  realizable method for constructing 	 confidence sets that adapt to the unknown rank of the true matrix.
\end{abstract}
\smallskip
\noindent \textbf{Keywords:} low rank recovery, confidence sets, adaptivity, matrix completion.
\section{Introduction}

\noindent
In recent years, there has been a considerable interest in statistical inference for high-dimensional matrices.   One particular problem is matrix completion where one observes only a small number $n \ll m_1m_2$ of the entries of a high-dimensional $m_1\times m_2$ matrix $M_0$ of unknown rank $r$; it aims at inferring the missing entries.   The problem of matrix completion  comes up in many areas including collaborative filtering, multi-class learning in data analysis, system identification in control, global positioning from partial distance information and computer vision, to mention some of them. For instance, in computer vision, this problem arises as many pixels may be missing in digital images. In collaborative filtering, one wants to make automatic predictions about the preferences of a user by collecting information from many users. So, we have a data matrix where rows are users and columns are items. For each user, we have a partial list of his preferences. We would like to predict the missing ones in order to be able to recommend items that he may be interested in.

In general, recovery of a matrix from a small number of observed entries
is impossible, but, if the unknown matrix has low rank, then accurate and even exact recovery is possible. In the  noiseless setting, \cite{Candes_Tao,candes_recht,candes_plan}
established the following remarkable result: assuming that it satisfies a low coherence condition, $M_0$  can be recovered exactly by constrained nuclear norm minimization with high probability from only $n \gtrsim  r (m_1\vee m_2)\log^2(m_1\vee m_2)$ entries observed uniformly at random. 

What makes low-rank matrices special is that they depend on a number of free parameters that is much smaller than the total number of entries. Taking the singular value decomposition of a matrix $A\in \mathbb{R}^{m_1\times m_2}$ of rank $r$, it is easy to see that $A$ depends upon $(m_1+m_2)r-r^{2}$ free parameters. This number of free parameters gives us a lower bound for the number of observations needed to complete the matrix. 

A situation, common  in applications, corresponds to
the noisy setting in which the few available entries are corrupted by noise.  Noisy matrix completion has
been extensively studied recently (e.g.,  \cite{Koltchinskii_Lounici_Tsybakov,Negahban_Wainwright, cai_max, Chatterjee_mc}). 
 Here we observe a relatively small number of entries of a data matrix \begin{equation*} 
Y=M_0+E
\end{equation*}
where $M_0=(M_{ij})\in \mathbb{R}^{m_{1}\times m_{2}}$ is the unknown matrix of interest and $E = (\varepsilon_{ij}) \in \mathbb{R}^{m_1\times m_2}$ is a matrix of random errors.  
 It is an important issue in applications to be able to say from the observations how well the recovery procedure has worked or, in the sequential sampling setting, to be able to give data-driven stopping rules that guarantee the recovery of the matrix $M_0$ at a given  precision. This fundamental statistical question was recently studied in \cite{klopp_ci} where two statistical models for matrix completion are considered: the \emph{trace regression model}  and the \emph{Bernoulli model} (for details see Section \ref{sec:not}). In particular, in  \cite{klopp_ci}, the authors show that in the case of  unknown noise variance, the information-theoretic structure of these two models  is fundamentally different. In  the trace regression model, even if only an upper bound for the variance of the noise is known, a honest and rank adaptive Frobenius-confidence set  whose diameter scales with the  minimax optimal estimation rate exists. In the Bernoulli model however, such sets do not exist.
 
 Another major difference is that, in the case of known variance of the noise, \cite{klopp_ci} provides  a realizable method for constructing  confidence sets for the trace regression model whereas for the Bernoulli model only the existence of adaptive and honest confidence sets is demonstrated. The proof uses the duality between the problem of testing the rank of a matrix and the existence of honest and adaptive confidence sets. In particular, the construction in \cite{klopp_ci}  is based on infimum test statistics which can not be computed in polynomial time for the matrix completion problem. 
  The present note aims to close this gap and provides a realizable method for constructing  confidence sets for the Bernoulli model. 

\subsection{Notation, assumptions and some basic results}\label{sec:not}
We assume that each entry of $Y$ is observed independently of the other entries with probability $p=n / (m_1 m_2)$. More precisely, if $n \le m_1 m_2$ is given and $B_{ij}$ are i.i.d.~Bernoulli random variables  of parameter $p$ independent of the $\varepsilon_{ij}$'s, we observe
\begin{equation} \label{Bernoulli_model}
Y_{ij}=B_{ij}\left (M_{ij}+\varepsilon_{ij}\right),~~1 \le i \le m_1, 1 \le j \le m_2.
\end{equation}
This model for the matrix completion problem is usually called the \textit{Bernoulli model}.
Another model often considered in the matrix completion literature is the trace regression model (e.g.,  \cite{Koltchinskii_Lounici_Tsybakov,Negahban_Wainwright, cai_max,klopp_general}). Let $k_0=\rank (M_0)$.

In many of the most cited applications of the matrix completion problem, such as recommendation systems or the problem of global positioning from the local distances, the noise is bounded but not necessarily identically distributed. This is the assumption  which we adopt in the present paper. More precisely, we assume that the noise variables are independent, homoscedastic, bounded and centered:
\begin{Assumption}\label{noise_boundedTR}
	For any $(ij)\in [m_1]\times [m_2]$ we assume that	$\bE(\varepsilon_{ij})=0$, $\bE(\varepsilon_{{ij}}^{2})=\sigma^{2}$ and that there exists a positive constant  $U>0$ such that
	\begin{equation*} 
	\underset{{ij}}{\max}\left \vert\varepsilon_{{ij}}\right \vert\leq U.
	\end{equation*}
\end{Assumption} \noindent
%

Let $m=\min(m_1,m_2)$, $d=m_1+m_2$. For any $l\in \mathbb{N}$ we set  $[l]=\{1,\dots,l \}$. For any integer $0\leq k\leq m$ and any $\ba>0$, we  define the parameter space of rank $k$ matrices with entries bounded by $\ba$ in absolute value as 
\begin{equation}\label{class_matrices}
 \begin{split}
 {\cal A}(k,\ba)
 &= \left \{ M\in\,\mathbb R^{m_1\times m_2}:\,
 \mathrm{rank}(M)\leq k,\,\Vert M\Vert_{\infty}\leq \ba \right \}.
 \end{split}
 \end{equation}
 For constants $\beta \in (0,1)$ and $c=c(\sigma, \ba) > 0$ we have that
 \begin{equation*} 
 \inf_{\widehat M} \sup_{M_0 \in \mathcal{A}(k,\ba)} \mathbb{P}_{M_0, \sigma} \left (   \dfrac{\|\widehat M- M_0\|^2_2}{m_1m_2} > c\frac{
 	kd }{n} \right ) \geq \beta
 \end{equation*}
where $\widehat M$ is an estimator of $M_0$ (see, e.g.,  \cite{klopp_thresholding}).
 It has been also shown in \cite{klopp_thresholding} that an iterative soft thresholding estimator $\widehat M $ satisfies with $\mathbb{P}_{M_0, \sigma}$-probability at least $1-8/d$
 \begin{equation} \label{upper_bound_bernoulli}
 \dfrac{\Vert \widehat M-M_0\Vert_{F}^{2}}{m_1m_2}\leq C\frac{
 	(\ba+\sigma)^2
 	kd}{n} \quad \text{and} \quad \Vert M_0 - \widehat M\Vert_{\infty}\leq 2\ba
 \end{equation} 
 for a constant $C> 0$. 
 These lower and upper bounds imply that for the Frobenius loss 
%
 the  minimax risk for recovering a  matrix $M_0\in {\cal A}(k_0,\ba)$ is of order $\sqrt{\dfrac{(\sigma+\ba)^{2} k_0dm_1m_2}{n}}$.
\\For $k\in[m]$ we set $$r_k= C\frac{(\sigma+\ba)^{2} dk}{n},$$
where $C$ is the numerical constant in \eqref{upper_bound_bernoulli}.


Let $A,B$ be matrices in $\mathbb{R}^{m_{1}\times m_{2}}$.
We define the \textit{matrix scalar product} as
$\langle A,B\rangle =\mathrm{tr}(A^{T}B)$.
The trace norm of a matrix $A=(a_{ij})$ is defined as  $\Vert A\Vert_{*}:=\sum \sigma_j(A)$, the operator norm as
$\Vert A\Vert:=\sigma_1(A)$ and the Frobenius norm as $\|A \|_2^2:=\sum_i \sigma_i^2 = \sum_{i,j} a_{ij}^2$ 
where  $(\sigma_j(A))_j$ are the singular values of $A$ ordered decreasingly.
$\left\Vert A\right\Vert_{\infty}=\underset{i,j}{\max}|a_{ij}|$  denotes the largest absolute value of any entry of $A$. 

In what follows, we use  symbols $C,c$ for a generic positive
constant, which is independent of $n$, $m_1,m_2$, and may take different
values at different places. We denote by $a\vee b=\max(a,b)$.

We use the following definition of honest and adaptive confidence sets: 
\begin{definition} 
	Let $\alpha, \alpha'>0$ be given. A set  $C_n=C_n((Y_{ij},B_{ij}), \alpha) \subset \mathcal{A}(m,\ba)$ is a honest confidence set at level $\alpha$ for the model $\mathcal A(m,\ba)$ if
	\begin{equation*} 
	 \liminf_{n}\inf_{M \in \mathcal A(m,\ba)} \mathbb P^n_{M}(M \in C_n) \geq 1-\alpha. \end{equation*}
	Furthermore, we say that $C_n$ is adaptive for the sub-model  $\mathcal{A}(k,\ba)$ at level $\alpha'$ if there exists a constant $C=C(\alpha, \alpha') > 0$ such that
	\begin{equation*} 
	\sup_{M\in \mathcal{A}(k,\ba)} \mathbb{P}^n_{M} \left (\|C_n\|_{2} > C r_k \right ) \leq \alpha'\end{equation*}
	while still retaining
	\begin{equation*} 
	\sup_{M\in \mathcal{A}(m,\ba)} \mathbb{P}^n_{M} \left (\|C_n\|_{2} > C r_m \right ) \leq \alpha'.\end{equation*}
\end{definition}

\section{A non-asymptotic confidence set for matrix completion problem}

 Let $\widehat M$ be an estimator of $M_0$ based on the observations $(Y_{ij},B_{ij})$ from the Bernoulli model \eqref{Bernoulli_model} such that $\Vert \widehat M\Vert_{\infty}\leq \ba$. Assume that for some $\beta > 0$  $\widehat M$ satisfies  the following risk bound: 
\begin{equation} \label{upper_bound_thresholding}
  \sup_{M_{0} \in \mathcal{A}(k_0,\ba)}  \mathbb{P}\left (\dfrac{\|\widehat M- M_0\|^2_2}{m_1m_2} \leq C
   \frac{(\sigma+\ba)^{2} k_0d}{n}\right )\geq 1-\beta.
 \end{equation}
We can take, for example, the thresholding, estimator considered in  \cite{klopp_thresholding} which attains \eqref{upper_bound_thresholding} with $\beta=8/d.$
  Our construction is based on Lepski's method. We denote by $\widehat M_{k}$ the projection of $\widehat M$ on  the set $\mathcal{A}(k,\ba)$ of matrices of rank $k$ with sup-norm bounded by $\ba$:
\begin{equation*}
\widehat M_{k}\in \underset{A\in \mathcal{A}(k,\ba) }{\argmin}\Vert \widehat M-A\Vert_{2}.
\end{equation*}
Set $$S=\{k:\;\Vert \widehat M-\widehat M_k\Vert^{2}_{2}\leq r_k\}\qquad \text{ and}\qquad k^{*}=\min \{k\in S\}.$$ We will use $\widetilde M=\widehat M_{k^{*}}$ to center our confidence set and the residual sum of squares statistic $\hat r_n$:
 \begin{equation}\label{residual}
 \hat r_n = \frac{1}{n} \sum_{ij}(Y_{ij}-B_{ij}\widetilde M_{ij})^2 - \sigma^2.
 \end{equation}
Given $\alpha>0$, let $$\bar z = \dfrac{p}{256}\|M-\widetilde M\|^2_2+ z(UC^{*})^{2}dk^{*}\quad \text{and}\quad \xi_{\alpha, U}=2U^{2}\sqrt{\log(\alpha^{-1})}+\dfrac{4U^{2}\log(\alpha^{-1})}{3\sqrt{n}}.$$ 
Here  $z$ is a sufficiently large numerical constant to be chosen later on and $C^{*}\geq 2$ is an universal constant in Corollary 3.12 \cite{Bandeira}. We  define the confidence set as follows:
\begin{equation} \label{RSSconf_bernoulli}
C_n = \left\{M \in \mathbb R^{m_1\times m_2}: \dfrac{\|M-\widetilde M\|_2^2}{ m_1m_2} \le  128 \left(\hat r_n + \frac{\ba^{2}zdk^{*}+\bar z}{n} + \frac{\xi_{\alpha, U}}{\sqrt n}\right)  \right\}.
\end{equation} 
\begin{theorem} \label{thm_bernoulli_known variance}
Let $\alpha>0$, $d>16$ and suppose that $\widehat M$ attains the risk bound \eqref{upper_bound_thresholding} with probability at least $1-\beta$. Let $C_n$ be given by (\ref{RSSconf_bernoulli}).
Assume that  $\Vert M_0\Vert_{\infty}\leq \ba$ and that Assumption~\ref{noise_boundedTR} is satisfied. Then, for every $n\geq m\log(d)$, we have
 \begin{equation}\label{thm_proba}
 \pp_{M_0}(M_{0} \in C_n) \ge 1-\alpha - \exp(-cd).
 \end{equation} 
 Moreover, with probability at least $1-\beta-\exp(-cd)$
 \begin{equation} \label{diameter_bernoulli}
 \frac{\Vert C_n\Vert^2_2}{m_1m_2}\leq C\frac{(\sigma+\ba)^{2} dk_0}{n}.
 \end{equation}
\end{theorem}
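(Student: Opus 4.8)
The plan is to read off both conclusions of the theorem from a single decomposition of the centred residual statistic $\hat r_n$ of \eqref{residual}. Writing $\Delta=M_0-\widetilde M$ and using $Y_{ij}=B_{ij}(M_{0,ij}+\varepsilon_{ij})$ together with $B_{ij}^2=B_{ij}$,
\[
\hat r_n
=\underbrace{\tfrac1n\textstyle\sum_{ij}B_{ij}\Delta_{ij}^2}_{=:T_1}
\;+\;\underbrace{\tfrac2n\textstyle\sum_{ij}B_{ij}\Delta_{ij}\varepsilon_{ij}}_{=:T_2}
\;+\;\underbrace{\bigl(\tfrac1n\textstyle\sum_{ij}B_{ij}\varepsilon_{ij}^2-\sigma^2\bigr)}_{=:T_3},
\]
where $T_1\ge 0$ and, since $\mathbb E[B_{ij}\varepsilon_{ij}^2]=p\sigma^2$ and $m_1m_2\,p=n$, the term $T_3$ is a centred sum. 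The quantity $\tfrac p{256}\|M_0-\widetilde M\|_2^2$ hidden inside $\bar z$ is placed there precisely so that, after dividing the inequality defining \eqref{RSSconf_bernoulli} through by $m_1m_2$ and using $128p/(256n)=1/(2m_1m_2)$, exactly half of $\|\Delta\|_2^2/(m_1m_2)$ migrates to the left; so the coverage statement reduces to showing $\tfrac12\|\Delta\|_2^2/(m_1m_2)\le 128\hat r_n+128z(\ba^2+(C^*U)^2)dk^*/n+128\xi_{\alpha,U}/\sqrt n$.

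For this I would use three estimates. First, for $T_1$, the restricted isometry / restricted strong convexity bound for uniform Bernoulli sampling (as in \cite{klopp_thresholding}): with probability $\ge 1-e^{-cd}$, for each $k\in[m]$ and every $A$ with $\|A\|_\infty\le 2\ba$ and $\mathrm{rank}(A)\le 2k$ one has $|T_1(A)-\|A\|_2^2/(m_1m_2)|\le \tfrac12\|A\|_2^2/(m_1m_2)+C\ba^2 dk/n$, applied at $A=\Delta$ after a union bound over the $[m]$-valued (but random) index $k^*$. Second, for $T_2=\tfrac2n\langle W,\Delta\rangle$ with $W$ the matrix with entries $B_{ij}\varepsilon_{ij}$, the bound $|T_2|\le\tfrac2n\|W\|\,\|\Delta\|_*\le\tfrac2n\|W\|\sqrt{\mathrm{rank}(\Delta)}\,\|\Delta\|_2$, combined with the operator norm estimate of Corollary~3.12 of \cite{Bandeira} (this is where $C^*$ and the scale $\|W\|\lesssim C^*U\sqrt{dp}$, valid because $n\ge m\log d$, enter) and a weighted Young inequality $2ab\le\eta a^2+\eta^{-1}b^2$ with a small constant $\eta$, giving $|T_2|\le\tfrac14\|\Delta\|_2^2/(m_1m_2)+C(C^*U)^2 d\,\mathrm{rank}(\Delta)/n$. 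Third, for $T_3$, Bernstein's inequality for the independent centred summands $B_{ij}\varepsilon_{ij}^2-p\sigma^2$ (bounded by $U^2$, total variance $\le nU^2\sigma^2$): this yields $|T_3|\le\xi_{\alpha,U}/\sqrt n$ with probability $\ge 1-\alpha$ — which is exactly how $\xi_{\alpha,U}$ is calibrated — and $|T_3|\le C(U\sigma\sqrt{d/n}+U^2d/n)$ with probability $\ge 1-e^{-cd}$.

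Inserting the lower bounds into $128\hat r_n$ gives $128\hat r_n\ge 32\|\Delta\|_2^2/(m_1m_2)-C(\ba^2+(C^*U)^2)d\,\mathrm{rank}(\Delta)/n-128\xi_{\alpha,U}/\sqrt n$, so the displayed target holds once $32\|\Delta\|_2^2/(m_1m_2)+128z(\ba^2+(C^*U)^2)dk^*/n\ge \tfrac12\|\Delta\|_2^2/(m_1m_2)+C(\ba^2+(C^*U)^2)d\,\mathrm{rank}(\Delta)/n$. As $\mathrm{rank}(\Delta)\le k_0+k^*$, this is immediate when $\|M_0-\widetilde M\|_2$ is large — the quadratic term $T_1$ alone dominates every other term with room to spare — whereas when $\|M_0-\widetilde M\|_2$ is small one exploits that $\widetilde M=\widehat M_{k^*}$ is, by the Lepski selection $k^*=\min S$, the rank‑minimal matrix fitting $\widehat M$ within tolerance $r_{k^*}$, which forces $M_0$ to be effectively of rank $\le k^*$ and makes the residual slack scale with $k^*$ rather than with $k_0$. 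Intersecting the $\alpha$‑ and $e^{-cd}$‑probability events yields \eqref{thm_proba}; no $\beta$ enters.

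For \eqref{diameter_bernoulli} I would work on the event of probability $\ge 1-\beta$ on which \eqref{upper_bound_thresholding} holds. Then $\|\widehat M-\widehat M_{k_0}\|_2^2\le\|\widehat M-M_0\|_2^2$ certifies $k_0\in S$, hence $k^*\le k_0$, and $\|M_0-\widetilde M\|_2\le\|M_0-\widehat M\|_2+\|\widehat M-\widehat M_{k^*}\|_2$ gives $\|M_0-\widetilde M\|_2^2/(m_1m_2)\lesssim r_{k_0}$ and $\mathrm{rank}(M_0-\widetilde M)\le 2k_0$. Solving the inequality defining \eqref{RSSconf_bernoulli} for $\|M-\widetilde M\|_2^2$ gives $\|C_n\|_2^2/(m_1m_2)\lesssim\hat r_n+z(\ba^2+(C^*U)^2)dk^*/n+\xi_{\alpha,U}/\sqrt n$; feeding in the matching \emph{upper} bounds on $T_1,T_2,T_3$ (Bernstein at the $e^{-cd}$ level for $T_3$), $k^*\le k_0$, the above, and the elementary $\sqrt{m_1m_2}\le d/2$ — which lets the generic constant absorb $\xi_{\alpha,U}/\sqrt n$ and the $T_3$ fluctuation into $C(\sigma+\ba)^2dk_0/n$ — one gets $\|C_n\|_2^2/(m_1m_2)\lesssim(\sigma+\ba)^2dk_0/n$ with probability $\ge 1-\beta-e^{-cd}$. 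The main obstacle throughout is the interaction between the data‑dependence of $\widetilde M$ and the concentration inequalities for $T_1$ and $T_2$: one cannot freeze $\widetilde M$, and the uniform‑over‑low‑rank controls must be organised so that the leftover slack is carried by $k^*$ — which is what the dichotomy on $\|M_0-\widetilde M\|_2$ achieves in the coverage part and what \eqref{upper_bound_thresholding} achieves in the diameter part — with the operator‑norm control of $W$ via Corollary~3.12 of \cite{Bandeira} under $n\ge m\log d$ being the other quantitatively decisive step.
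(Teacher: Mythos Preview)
Your decomposition $\hat r_n=T_1+T_2+T_3$ is exactly the paper's, your treatment of $T_3$ via Bernstein is identical, and your diameter argument (restrict to the $1-\beta$ event, deduce $k^*\le k_0$, then feed the upper bounds back into $\hat r_n$) matches the paper's. The gap is in the coverage part, and it is precisely the issue you flag but do not resolve: your uniform controls on $T_1$ and $T_2$ are indexed by $\mathrm{rank}(\Delta)=\mathrm{rank}(M_0-\widetilde M)\le k_0+k^*$, whereas the slack built into \eqref{RSSconf_bernoulli} is only of order $dk^*/n$. Your dichotomy on the size of $\|M_0-\widetilde M\|_2$ does not close this: the assertion that a small $\|M_0-\widetilde M\|_2$ ``forces $M_0$ to be effectively of rank $\le k^*$'' is unjustified --- $M_0$ may well have $k_0\gg k^*$ nonzero singular values, many of them tiny, and nothing in the Lepski step bounds $k_0$ in terms of $k^*$. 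Since the coverage statement must hold \emph{without} invoking the $1-\beta$ event, you cannot fall back on $k^*\le k_0$ either.

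The paper's device is to take the supremum not over differences $A-M_0$ of bounded rank, but over candidate estimators $A$ of rank $\le k$ (the class $\mathcal C=\bigcup_k\mathcal C(k,\ba)$), so that the slack scales with $\mathrm{rank}(\widetilde M)=k^*$. Concretely, for the stochastic term $T_2$ (paper's term II, Lemmas~6--8) one peels on $T=\|A-M_0\|_2$, fixes a reference point $X_0\in\mathcal C(k,\ba,T)$, and splits $A-M_0=(X_0-M_0)+(A-X_0)$: the first piece is handled \emph{pointwise} by Bernstein and absorbed into the peeling level $pT^2$ (hence ultimately into $\tfrac{p}{256}\|M_0-\widetilde M\|_2^2$ inside $\bar z$), while the second piece has $\mathrm{rank}(A-X_0)\le 2k$, so the trace--duality bound you wrote down now yields slack $\propto dk$ rather than $d(k+k_0)$. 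The same mechanism (peeling plus a covering net of rank-$\le k$ matrices, Lemmas~1--5) is what makes the $T_1$ bound scale with $k^*$. This reference-point trick is the missing idea; once it is in place, the rest of your outline goes through.
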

Theorem \ref{thm_bernoulli_known variance} implies that $C_n$ is an honest and adaptive confidence set:
\begin{corollary}
Let $\alpha>0$, $d>16$ and suppose that $\widehat M$ attains the risk bound \eqref{upper_bound_thresholding} with probability at least $1-\beta$. Let $C_n$ be given by (\ref{RSSconf_bernoulli}).
 Assume that Assumption~\ref{noise_boundedTR} is satisfied. Then, for $n\geq m\log(d)$,  $C_n$ is a $\alpha + \exp(-cd)$ 
 honest confidence set for the model $\mathcal A(m,\ba)$ and adapts to every sub-model $\mathcal{A} (k, \bf{a})$, $1 \leq k \leq m$, at level $\beta+\exp(-cd)$.
\end{corollary}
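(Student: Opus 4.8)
The plan is to control the residual statistic $\hat r_n$ from \eqref{residual} from both sides. Writing $\Delta:=M_0-\widetilde M$ and using $Y_{ij}=B_{ij}(M_{0,ij}+\varepsilon_{ij})$ together with $B_{ij}^2=B_{ij}$, one expands
\begin{equation*}
\hat r_n \;=\; \frac1n\sum_{ij}B_{ij}\Delta_{ij}^2 \;+\; \frac2n\sum_{ij}B_{ij}\Delta_{ij}\varepsilon_{ij} \;+\; \Big(\frac1n\sum_{ij}B_{ij}\varepsilon_{ij}^2-\sigma^2\Big).
\end{equation*}
Here $\widetilde M=\widehat M_{k^*}\in\mathcal A(k^*,\ba)$ (this set contains $\widehat M$ since $\|\widehat M\|_\infty\le\ba$, so $m\in S$ and $k^*$ is well defined), hence $\|\Delta\|_\infty\le 2\ba$ and $\Delta$ equals $M_0$ minus a matrix of rank $k^*$. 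The coverage claim \eqref{thm_proba} follows once we show that, on an event of probability at least $1-\alpha-\exp(-cd)$, one has $128\,\hat r_n \ge \|\Delta\|_2^2/(m_1m_2) - 128(\ba^2zdk^*+\bar z)/n - 128\,\xi_{\alpha,U}/\sqrt n$ (with $M=M_0$ in $\bar z$), which is exactly the membership condition $M_0\in C_n$ rearranged; the diameter claim \eqref{diameter_bernoulli} follows from the matching upper bound on $\hat r_n$, combined with the fact that on the event that $\widehat M$ obeys \eqref{upper_bound_thresholding} one has $\|\Delta\|_2^2\lesssim r_{k_0}m_1m_2$.

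I would treat the three terms separately. The noise term has mean zero (as $\ee B_{ij}\varepsilon_{ij}^2=p\sigma^2$ and $m_1m_2\,p=n$) and is a sum of $m_1m_2$ independent summands in $[0,U^2]$ with total variance at most $nU^4$, so Bernstein's inequality yields $\big|\frac1n\sum_{ij}B_{ij}\varepsilon_{ij}^2-\sigma^2\big|\le \xi_{\alpha,U}/\sqrt n$ with probability at least $1-\alpha$ — precisely the shape of $\xi_{\alpha,U}$. The quadratic and cross terms are the heart of the proof, and the crucial point — this is what produces a penalty $\propto dk^*/n$ rather than $\propto d(k_0+k^*)/n$ — is that they must be controlled uniformly over $\widetilde M\in\mathcal A(k^*,\ba)$, i.e.\ over the family $\{M_0-A:A\in\mathcal A(k,\ba)\}$, which is a \emph{fixed translate} of $\mathcal A(k,\ba)$ and hence has metric complexity governed by $k$, not $k_0+k$. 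Concretely I would establish, on one event of probability at least $1-\exp(-cd)$, simultaneously for every $k\in[m]$ and every $A\in\mathcal A(k,\ba)$,
\begin{gather*}
\Big|\tfrac1n\sum_{ij}B_{ij}(M_{0,ij}-A_{ij})^2-\tfrac{\|M_0-A\|_2^2}{m_1m_2}\Big|\le\tfrac14\tfrac{\|M_0-A\|_2^2}{m_1m_2}+C\ba^2\tfrac{dk}{n},\\
\Big|\tfrac2n\sum_{ij}B_{ij}(M_{0,ij}-A_{ij})\varepsilon_{ij}\Big|\le\tfrac14\tfrac{\|M_0-A\|_2^2}{m_1m_2}+C(UC^*)^2\tfrac{dk}{n}.
\end{gather*}
Both are obtained by peeling over the scale $\rho=\|M_0-A\|_2$ (for $\rho$ below the distance from $M_0$ to the variety of rank-$k$ matrices the index set is empty, which is what lets one write $M_0-A$ as a fixed offset plus a matrix of rank $O(k)$ and Frobenius norm $O(\rho)$), symmetrization and contraction, a Gaussian/matrix-Bernstein estimate in the spirit of Corollary~3.12 of \cite{Bandeira} for the operator norm of the sparse random matrices $\sum_{ij}B_{ij}g_{ij}e_ie_j^{T}$ and $\sum_{ij}B_{ij}\varepsilon_{ij}e_ie_j^{T}$ (whose row and column variances are $O(\sigma^2 n/m)$, so that their operator norms are $O(UC^*\sqrt{n/m})$), nuclear/Frobenius duality on the localized index set, and finally Talagrand's inequality for the supremum to pass from the bound in expectation to the stated high-probability bound; the identity $m_1m_2/m=m_1\vee m_2\le d$ converts the $\sqrt{n/m}$-scalings into the $dk/n$ form.

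With these in hand the remainder is bookkeeping. For coverage, take $A=\widetilde M$, $k=k^*$, insert the expansion of $\hat r_n$, use $128\,\bar z/n=\tfrac{\|\Delta\|_2^2}{2m_1m_2}+\tfrac{128z(UC^*)^2dk^*}{n}$ (from $128p/(256n)=1/(2m_1m_2)$), and choose the numerical constant $z$ larger than the absolute constants in the two displayed bounds: the lower bound on $128\,\hat r_n+128(\ba^2zdk^*+\bar z)/n+128\,\xi_{\alpha,U}/\sqrt n$ then exceeds $\|\Delta\|_2^2/(m_1m_2)$, so $M_0\in C_n$; no bound on $k_0$ is used, so this holds for every $M_0\in\mathcal A(m,\ba)$ (including full-rank ones). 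For the diameter, on the event that \eqref{upper_bound_thresholding} holds, optimality of the rank-$k_0$ projection gives $\|\widehat M-\widehat M_{k_0}\|_2\le\|\widehat M-M_0\|_2$, so $k_0\in S$ and $k^*\le k_0$, and $\|\Delta\|_2\le\|M_0-\widehat M\|_2+\|\widehat M-\widehat M_{k^*}\|_2\lesssim\sqrt{r_{k_0}m_1m_2}$; intersecting with the concentration event and using the upper-bound halves of the displayed inequalities together with $1/\sqrt n\le d/n$ (valid since $n\le m_1m_2\le d^2/4$) bounds $\hat r_n$ and each additive correction inside $C_n$ by a multiple of $(\sigma+\ba)^2dk_0/n$, which is \eqref{diameter_bernoulli}.

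The main obstacle is precisely the pair of uniform-in-$A$ empirical-process bounds, and within them the requirement that the complexity scale with $k=\rank(\widetilde M)$ rather than with $\rank(M_0-A)\le k_0+k$: a naive use of $\|M_0-A\|_*\le\sqrt{\rank(M_0-A)}\,\|M_0-A\|_2$ would inject an uncontrollable $dk_0/n$ term into the coverage bound, which is fatal since $k_0$ is unknown. The peeling argument combined with the decomposition of $M_0-A$ through the best rank-$k$ approximation of $M_0$ is what circumvents this, and calibrating the Talagrand and operator-norm constants so that they are dominated by the explicit quantities $\bar z$, $z$, $C^*$ and $\xi_{\alpha,U}$ built into $C_n$ is the bulk of the remaining work.
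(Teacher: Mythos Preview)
Your proposal is correct and follows essentially the same architecture as the paper: you decompose $\hat r_n$ into a quadratic term, a cross term and a pure-noise term, handle the last by Bernstein, and control the first two uniformly in $A\in\mathcal A(k,\ba)$ via peeling plus an anchor-point decomposition so that the complexity scales with $k$ rather than $k_0+k$; the paper packages the quadratic bound as a norm (rather than squared-norm) comparison in Lemma~\ref{lem_bernoulli_rip} and the cross bound in Lemma~\ref{stoch}, proved through the same chain of peeling, Talagrand concentration, symmetrization/contraction and the operator-norm estimate from \cite{Bandeira}. One small technical point: the paper anchors at an arbitrary $X_0\in\mathcal C(k,\ba,T)$ (guaranteeing $\|M_0-X_0\|_\infty\le 2\ba$ for the Bernstein step on the fixed part), whereas your suggested anchor $M_0^{(k)}$ need not obey the sup-norm constraint---this is easily fixed by taking instead any element of the nonempty peeling shell, after which your argument and the paper's coincide.
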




\begin{proof}[Proof of Theorem \ref{thm_bernoulli_known variance}]  We consider the following  sets 
\begin{equation*} 
\mathcal{C}(k,\ba)=\left \{A\in\mathbb{R}^{m_1\times m_2} \,:\, \left\Vert A\right\Vert_{\infty}\leq \ba, \left\Vert M_0-A\right\Vert_2^{2}\geq \dfrac{256(\ba\vee U)^{2}zd}{p}\;\text{and}\; \rank (A)\leq k\right \}
\end{equation*}
and write
\begin{equation}\label{def_C}
\mathcal{C}=\cup^{m}_{k=1}\mathcal{C}(k,\ba).
\end{equation}
When $\left\Vert M_0-\widetilde{M}\right\Vert_{2}^{2}\leq \dfrac{256(\ba\vee U)^{2}zd}{p}$
we have that $M_0\in C_n$. So, we only need to consider the case $\left\Vert M_0-\widetilde{M}\right\Vert_{2}^{2}\geq \dfrac{256(\ba\vee U)^{2}zd}{p}$. In this case  we have that $\widetilde{M}\in \mathcal{C}$.
We introduce  the observation operator $\mathcal X$ defined as follows, 
$$\mathcal X\,:\bR^{m_1\times m_2}\rightarrow \bR ^{m_1\times m_2} \qquad\text{with}\qquad \mathcal X(A)=(B_{ij}a_{ij})_{ij}.~~ $$ and set $\|A \|_{L_2(\Pi)}^2=\mathbb E\|\mathcal X(A)\|_2^2=p\|A \|^{2}_2$.  We can  decompose
\begin{align*}
\hat r_n = n^{-1}\|\mathcal X(\widetilde M - M_0)\|_{2}^2 + 2n^{-1} \langle \mathcal X(E),  M_0 -\widetilde M \rangle + n^{-1}\|\mathcal X(E)\|_{2}^2-\sigma^{2}. 
\end{align*}
Then we can bound the probability $\pp_{M_0} (M_0 \notin C_n)$
by the sum of the following probabilities:
$$I := \pp_{M_0}\left(\frac{\|\widetilde M - M_0\|_{L_2(\Pi)}^2}{128} >  \|\mathcal X(\widetilde M - M_0)\|_{2}^2 + z\ba^{2}dk^{*}  \right),$$
$$II := \pp_{M_0}\left(-  2 \langle \mathcal X(E),  M_0-\widetilde M \rangle >   \bar z\right), $$
$$III := \pp_{M_0} \left(- \|\mathcal X(E)\|_{2}^2+n\sigma^{2} >\sqrt{n} \xi_{\alpha,U}\right). $$
By Lemma \ref{lem_bernoulli_rip}, the first probability is bounded by $8\exp\left (-4d\right )$ for $z\geq (27C^{*})^{2}$.
For the second term  we use Lemma \ref{stoch} which implies that $II\leq \exp(-cd)$ for $z\geq 6240$.
 Finally, for the third term, Bernstein's inequality implies
 \begin{align*}
          \mathbb{P} \left\{ - \|\mathcal X(E)\|_{2}^2+n\sigma^{2} >  t \right\} \le \exp \left( -  \frac{t^2}{2\sigma^{2}nU^2+\frac{2}{3}U^{2}t} \right).
          \end{align*}
 Taking $t=2U^{2}\sqrt{n\log(\alpha^{-1})}+\frac{4}{3}U^{2}\log(\alpha^{-1})$ we get that $III\leq \alpha$ by definition of $\xi_{\alpha,U}$. This completes the proof of \eqref{thm_proba}. 
 
 To  prove \eqref{diameter_bernoulli}, using  Lemma \ref{lem_bernoulli_rip} and Lemma \ref{stoch}, we can bound the  square Frobenius norm diameter of our confidence set $C_n$ defined in \eqref{RSSconf_bernoulli} as follows:
 \begin{equation*} 
 \frac{\Vert C_n\Vert^2_2}{m_1m_2}\lesssim \frac{\|\widetilde M -M_{0}\|_2^2}{m_1m_2} + \left (r_{k^{*}} + \frac{\xi_{\alpha, U}}{\sqrt n}\right ).
 \end{equation*}
  This bound holds on an event of probability at least
  $1-\exp(-cd)$. Now we restrict to the event where $\widehat M$ attains the risk bound in \eqref{upper_bound_thresholding} which happens with probability at least $1-\beta$. On this event, $M_0\in \mathcal{A}(k_0,\ba)$ implies $\Vert \widehat M- \widehat M_{k_0}\Vert_{2}^2\leq r_{k_0}$. So,  $k_0\in S$ and $k^{*}\leq k_0$. Now, the  triangle inequality and $r_{k^{*}}\leq r_{k_0}$  imply that on the intersection of those two events we have that$$ \|\widetilde M -M_{0}\|_2^2\lesssim m_1m_2\left (r_{k_0}+r_{k^{*}}\right )\lesssim m_1m_2r_{k_0}.$$  This, together with the definition of $\xi_{\alpha,U}$ and  the condition $n\leq m_1m_2$, completes the proof of \eqref{diameter_bernoulli}.
\end{proof}


\appendix
\section{Technical Lemmas}

\begin{lemma}\label{lem_bernoulli_rip}
With probability larger then $1-8\exp\left (-4d\right )$ we have that
$$\underset{A\in\mathcal{C}}{\sup}\dfrac{\left \vert \left\Vert \mathcal X(M_0-A)\right\Vert_{2}-\Vert M_0-A\Vert _{L_2(\Pi)}\right \vert-\frac{7}{8}\Vert M_0-A\Vert _{L_2(\Pi)}}{\ba\sqrt{\rank(A)d}}\leq 27\,C^{*}$$
where $C^{*}$ is an universal numerical constant and $\mathcal{C}$ is defined in \eqref{def_C}.

\end{lemma}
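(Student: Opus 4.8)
The plan is to prove a uniform deviation bound for the linear operator $\mathcal X$ over the set $\mathcal C$ via a peeling argument combined with a matrix concentration inequality, in the spirit of restricted-isometry-type arguments for the Bernoulli sampling model. The main point is that although $\mathcal C$ is not bounded in Frobenius norm, it is a union over ranks $k\le m$ of sets of matrices with sup-norm bounded by $\ba$ and rank at most $k$, and the quantity being controlled is normalized by $\ba\sqrt{\rank(A)d}$, so the argument must be run rank-by-rank and radius-by-radius.

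First I would fix $k$ and a dyadic scale, i.e. restrict attention to $A$ with $\rank(A)\le k$, $\|A\|_\infty\le\ba$ and $\|M_0-A\|_{L_2(\Pi)}\in[\rho,2\rho)$ for a fixed $\rho$. Setting $Z=M_0-A$, I note that $\mathcal X(Z)=\sum_{ij}B_{ij}Z_{ij}e_ie_j^T$ is a sum of independent mean-zero (after centering) matrix-valued contributions, and $\mathbb E\|\mathcal X(Z)\|_2^2 = p\|Z\|_2^2 = \|Z\|_{L_2(\Pi)}^2$. The functional $f(B) = \big|\|\mathcal X(Z)\|_2 - \|Z\|_{L_2(\Pi)}\big|$, or rather its supremum over the localized set, is controlled by (i) a bound on its expectation and (ii) a bounded-differences / Talagrand-type concentration step. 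For the expectation, symmetrization reduces the problem to bounding $\mathbb E\sup \|\sum_{ij}\varepsilon'_{ij}B_{ij}Z_{ij}e_ie_j^T\|$-type quantities, and this is exactly where Corollary~3.12 of \cite{Bandeira} enters: it gives the universal constant $C^{*}\ge2$ and an operator-norm bound on a random matrix with independent bounded entries in terms of the variance parameters and the $\ell_\infty$ bound, yielding a bound of order $\ba\sqrt{d}$ (times polylog, absorbed) for the relevant operator norm. Duality between the trace norm and the operator norm, together with $\|Z\|_* \le \sqrt{\rank(Z)}\,\|Z\|_2 \le \sqrt{2k}\,\|Z\|_2$ on this slice, converts the operator-norm control into the claimed control of $\big|\|\mathcal X(Z)\|_2 - \|Z\|_{L_2(\Pi)}\big|$ relative to $\ba\sqrt{k d}$.

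Next I would add the fluctuation term: conditionally on the localization, $\|\mathcal X(Z)\|_2$ is a Lipschitz function of the independent Bernoullis (changing one $B_{ij}$ changes $\|\mathcal X(Z)\|_2$ by at most $|Z_{ij}|\le 2\ba$), so bounded-differences / Bousquet's inequality gives a sub-Gaussian-with-Bernstein-correction tail at scale $\ba\sqrt{d}$ once we are on the event $\|Z\|_{L_2(\Pi)}\le 2\rho$ with $\rho$ itself at least of order $\ba\sqrt{zd/p}$ — which is precisely guaranteed by the definition of $\mathcal C(k,\ba)$, since there $\|M_0-A\|_2^2 \ge 256(\ba\vee U)^2 zd/p$. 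This lower bound on the radius is what makes the $7/8$-type multiplicative slack affordable: the additive error $\ba\sqrt{kd}$ is a small fraction of $\|Z\|_{L_2(\Pi)}$, so the deviation $\big|\|\mathcal X(Z)\|_2-\|Z\|_{L_2(\Pi)}\big|$ stays below $\tfrac78\|Z\|_{L_2(\Pi)} + 27C^{*}\ba\sqrt{kd}$ with the stated exponential probability. Finally I would union-bound over the $O(m)$ choices of $k$ and the $O(\log)$-many dyadic scales of $\rho$ (the range of $\rho$ is polynomially bounded because $\|Z\|_\infty\le 2\ba$ forces $\|Z\|_2^2\le 4\ba^2 m_1 m_2$), each scale costing a factor like $e^{-cd}$ or, after absorbing a covering-number term of the rank-$k$ sup-norm ball of metric entropy $\lesssim kd\log(\cdot)$, a factor $e^{-5d}$, so that the total failure probability is still bounded by $8e^{-4d}$ for $z$ large enough and $d$ large.

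The main obstacle is getting the constants to line up: one must choose the threshold constant $z$ (appearing both in the definition of $\mathcal C$ and in the statement, through the factor $27C^{*}$ and the requirement $z\ge(27C^{*})^2$ used back in the proof of Theorem~\ref{thm_bernoulli_known variance}) large enough that, after the covering argument inflates the metric-entropy contribution to size $\lesssim kd$, this term can still be dominated by the $\ba^2 kd$ budget and by the radius lower bound $256(\ba\vee U)^2 zd/p$, leaving a genuine $e^{-4d}$ (not merely $e^{-cd}$) after the union bound over ranks. Tracking the interplay of $\ba$, $U$, $p=n/(m_1m_2)$, and the condition $n\ge m\log d$ through the symmetrization, the application of \cite{Bandeira}, the concentration step, and the peeling is the delicate bookkeeping; the conceptual pieces are all standard restricted-isometry technology for random sampling operators.
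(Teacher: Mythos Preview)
Your high-level plan --- union bound over ranks, peeling over Frobenius radius, Talagrand-type concentration for the supremum, and an expectation bound via symmetrization/contraction together with the Bandeira--van Handel operator-norm estimate --- coincides with the paper's. The concrete gap is in the step where you invoke trace-norm/operator-norm duality: you assert $\|Z\|_{*}\le\sqrt{2k}\,\|Z\|_{2}$ for $Z=M_{0}-A$, but in fact $\rank(Z)\le k_{0}+\rank(A)$ with $k_{0}=\rank(M_{0})$, and nothing on your slice forces $k_{0}\le k$. When $\rank(A)<k_{0}$, the expectation bound you sketch therefore scales like $\ba\sqrt{(k_{0}+k)d}$ rather than $\ba\sqrt{\rank(A)\,d}$, so it does not deliver the lemma as stated (the denominator is $\ba\sqrt{\rank(A)\,d}$, not $\ba\sqrt{k_{0}\,d}$).

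The paper avoids precisely this difficulty by inserting a discretization step inside each localized shell $\mathcal C(k,\ba,T)$. It builds a $\delta$-net $\{G^{j}_{\delta}\}$ of cardinality at most $(18T/\delta)^{2(d+1)k}$ (Lemma~\ref{lemma_entropy}) and writes $A=G^{j}_{\delta}+\Delta$. The deviation at the finitely many net points, $\bigl|\|\mathcal X(M_{0}-G^{j}_{\delta})\|_{2}-\|M_{0}-G^{j}_{\delta}\|_{L_{2}(\Pi)}\bigr|$, is controlled \emph{pointwise} by Talagrand's convex-Lipschitz inequality and then union-bounded (Lemma~\ref{lemma_max}); that step uses only $\|M_{0}-G^{j}_{\delta}\|_{\infty}\le 2\ba$ and a variance bound, never a trace-norm bound on $M_{0}-G^{j}_{\delta}$, so $k_{0}$ never enters. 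The remaining supremum over residuals $\Delta$ is handled by exactly your symmetrization/contraction/duality chain (Lemma~\ref{lemma_concentration}), but now $\Delta$ lies in a set with $\|\Delta\|_{*}\le 2\sqrt{2k}\,T$ by construction of the net, so the $\sqrt{\rank(A)\,d}$ scaling is recovered. This net-plus-residual split --- applying the trace-norm argument only to the residual, whose nuclear norm is controlled by $k$ rather than by $k_{0}+k$ --- is the missing ingredient in your sketch; the covering you mention at the union-bound stage is not the same thing and does not repair the rank issue in the expectation bound.
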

\begin{proof}
We have that 
\begin{align}\label{lemma_bernoulli_rip_1}
&\mathbb{P} \left(\underset{A\in \mathcal{C}}{\sup}\dfrac{\left \vert \left\Vert \mathcal X(M_0-A)\right\Vert_{2}-\Vert M_0-A\Vert _{L_2(\Pi)}\right \vert-\frac{7}{8}\Vert M_0-A\Vert _{L_2(\Pi)}}{\ba\sqrt{\rank(A)d}}\geq 27\,C^{*}\right )\nonumber\\&\leq
\sum_{k=1}^{k_0}\underset{\mathrm I}{\underbrace{\mathbb{P} \left(\underset{A\in \mathcal{C}(k,\ba)}{\sup}\left \vert \left\Vert \mathcal X(M_0-A)\right\Vert_{2}-\Vert M_0-A\Vert _{L_2(\Pi)}\right \vert-\frac{7}{8}\Vert M_0-A\Vert _{L_2(\Pi)}\geq 27\,C^{*}\ba\sqrt{kd}\right )}}.
\end{align}
In order to upper bound $\mathrm I$, we use a standard peeling argument. Let $\alpha=7/6$ and $\nu^{2}=\dfrac{470\ba^{2}zd}{p}$. For $l\in\mathbb N$ set $$S_l=\left \{A\in \mathcal{C}(k,\ba)\,:\,\alpha^{l}\nu \leq \Vert A-M_0\Vert _2\leq \alpha^{l+1}\nu\right \}.$$
Then
\begin{align*}
\mathrm I&\leq \sum _{l=1}^{\infty} \mathbb{P} \left(\underset{A\in S_l}{\sup}\left \vert \left\Vert \mathcal X(M_0-A)\right\Vert_{2}-\Vert M_0-A\Vert _{L_2(\Pi)}\right \vert\geq 27\,C^{*}\ba\sqrt{kd}+ \frac{7}{8}\alpha^{l}\ba\sqrt{470zd}\right )\\&\leq \sum _{l=1}^{\infty} \underset{\mathrm {II}}{\underbrace{\mathbb{P} \left(\underset{A\in \mathcal{C}(k,\ba,\alpha^{l+1}\nu)}{\sup}\left \vert \left\Vert \mathcal X(M_0-A)\right\Vert_{2}-\Vert M_0-A\Vert _{L_2(\Pi)}\right \vert\geq 27\,C^{*}\ba\sqrt{kd}+ \frac{7}{8}\alpha^{l}\ba\sqrt{470zd}\right )}}
\end{align*}
where $\mathcal{C}(k,\ba,T)=\{A\in \mathcal{C}(k,\ba)\,:\,  \left\Vert M_0-A\right\Vert_2\leq T \}$. The following Lemma gives an upper bound on $\mathrm {II}$:
\begin{lemma}\label{lemma_sup}
 Consider the following set of matrices $$\mathcal{C}(k,\ba,T)=\left \{A\in\mathcal{C}(k,\ba) \,:\,  \left\Vert M_0-A\right\Vert_2\leq T \right \}
 $$ and set $$Z_T=\underset{A\in \mathcal{C}(k,\ba,T)}{\sup}\left \vert \left\Vert \mathcal X(M_0-A) \right\Vert_{2}-\Vert M_0-A\Vert _{L_2(\Pi)}\right \vert.$$ Then, we have that  \begin{equation*}
  \mathbb{P}\left ( Z_{T} \geq \frac{3}{4}\sqrt{p}T+27\,C^{*}\ba\sqrt{kd}\right )\leq 4e^{-c_1\,p\,T^{2}/\ba^{2}}
  \end{equation*}
  with $c_1\geq (512)^{-1}$.
 \end{lemma}

 Lemma \ref{lemma_sup} implies that $\mathrm {II} \leq 8\exp(-c_1\,p\,\alpha^{2l}\nu^{2}/\ba^{2})$ and we obtain 
 \begin{equation*}
 \mathrm {I}\leq 8\sum_{l=1}^\infty\exp(-c_1\,p\,\alpha^{2l+2}\nu^{2}/\ba^{2})\leq 8\sum_{l=1}^\infty\exp\left (-2c_1\,p\,\nu^{2}\,\log(\alpha)\,l/\ba^{2}\right )
 \end{equation*} 
 where we used $e^{x}\geq x$. We finally compute for $\nu^{2}=188\ba^{2}zdp^{-1}$
 \begin{equation*}
 \mathrm {I}\leq \dfrac{8\exp\left (-2c_1\,p\,\nu^{2}\,\log(\alpha)/\ba^{2}\right )}{1-\exp\left (-2c_1\,p\,\nu^{2}\,\log(\alpha)/\ba^{2}\right )}\leq 16\exp\left (-376\,c_1zd\log(7/6)\right )\leq \exp(-5d)
 \end{equation*} 
 where we take $z\geq (27C^{*})^{2}$. Using \eqref{lemma_bernoulli_rip_1} and $d\geq \log(m)$ we get the statement of Lemma \ref{lem_bernoulli_rip}.\\
  

\end{proof}



%

\appendix

\begin{proof}[Proof of Lemma \ref{lemma_sup}]
This proof is close to the proof of Theorem 1 in \cite{Negahban_Wainwright}. We start by applying the  discretization argument. Let $\{G_{\delta}^{1},\dots G_{\delta}^{N(\delta)}\}$ be a $\delta-$covering of $\mathcal{C}(k,\ba,T)$ given by Lemma \ref{lemma_entropy}. Then, for any $A\in \mathcal{C}(k,\ba,T)$ there exists some index $i\in \{1,\dots,N(\delta)\}$ and a matrix $\Delta$ with $\Vert \Delta\Vert_{2}\leq \delta$ such that $A=G_{\delta}^{i}+\Delta$. Using the triangle inequality we have
   \begin{align*}
    \left \vert \Vert M_0-A\Vert _{L_2(\Pi)}-\left\Vert \mathcal X(M_0-A) \right\Vert_{2}\right \vert\leq \left \vert \left\Vert \mathcal X(M_0-G_{\delta}^{i}) \right\Vert_{2}-\Vert M_0-G_{\delta}^{i}\Vert _{L_2(\Pi)}\right \vert+\left\Vert \mathcal X \Delta \right\Vert_{2}+\sqrt{p}\delta.
     \end{align*}  
 Lemma \ref{lemma_entropy} implies that $\Delta\in \mathcal{D}_{\delta}(2k,2\ba,2T)$ where $$\mathcal{D}_{\delta}(k,\ba,T)=\{A\in \mathbb{R}^{m_1\times m_2}\;:\Vert A\Vert_{\infty}\leq \ba,\,\Vert A\Vert_{2}\leq \delta\quad \text{and}\quad \Vert A\Vert_{*}\leq \sqrt{k}T\}.$$ Then,
 \begin{align*}
     Z_{T}\leq \underset{i=1,\dots,N(\delta)}{\max}\left \vert \left\Vert \mathcal X(M_0-G_{\delta}^{i}) \right\Vert_{2}-\Vert M_0-G_{\delta}^{i}\Vert _{L_2(\Pi)}\right \vert+\underset{\Delta\in \mathcal{D}_{\delta}(2k,2\ba,2T) }{\sup}\left\Vert \mathcal X \Delta \right\Vert_{2}+\sqrt{p}\delta.
      \end{align*}  
 
 Now we take $\delta=T/8$ and use Lemma \ref{lemma_max} and Lemma \ref{lemma_concentration} to get
 \begin{align*}
      Z_{T}\leq \sqrt{p}\delta+ 8\,C^{*}\ba\sqrt{kd}+19\ba\,C^{*}\sqrt{2kd}+\sqrt{p}T/2+\sqrt{p}\delta\leq 27\,C^{*}\ba\sqrt{kd}+6\sqrt{p}T/8.
       \end{align*}  
       with probability at least $1-8\exp\left (-\frac{pT^{2}}{512\ba^{2}}\right )$.
       \end{proof}

 \begin{lemma}\label{lemma_entropy}
 	Let $\delta=T/8$. There  exists a set of matrices $\{G_{\delta}^{1},\dots G_{\delta}^{N(\delta)}\} $ with $N_{\delta}\leq \left (\frac{18T}{\delta}\right )^{2(d+1)k}$ and such that 
 	\begin{itemize}
 		\item [(i)] For any $A\in \mathcal{C}(k,\ba,T)$ there exists a $G_{\delta}^{A}\in \{G_{\delta}^{1},\dots G_{\delta}^{N(\delta)}\}$ satisfying $$\Vert A-G_{\delta}^{A}\Vert_{2}\leq \delta\quad \text{and}\quad (A-G_{\delta}^{A})\in \mathcal{D}_{\delta}(2k,2\ba,2T).$$
 		\item [(ii)] Moreover, $\Vert G^{j}_{\delta}-M_0\Vert_{\infty}\leq 2\ba$ and $\Vert G^{j}_{\delta}-M_0\Vert_{2}\leq 2T$ for any $j=1,\dots,N_{\delta}$.
 	\end{itemize}
 \end{lemma}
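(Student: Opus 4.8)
The plan is to reduce the statement to a standard metric-entropy bound for a \emph{centred} Frobenius ball of low-rank matrices — which forces us first to translate away $M_0$, a matrix whose own Frobenius norm may be of order $\sqrt{m_1m_2}\,\ba$ — and then to pull the resulting net back inside $\mathcal{C}(k,\ba,T)$, so that the sup-norm and rank side conditions demanded in (i)--(ii) are inherited for free. We may assume $\mathcal{C}(k,\ba,T)\neq\emptyset$, otherwise the statement is vacuous.

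First I would pass to differences. For $A\in\mathcal{C}(k,\ba,T)$ the matrix $D:=A-M_0$ satisfies $\rank(D)\le\rank(A)+\rank(M_0)\le 2k$ and $\|D\|_2\le T$, hence $\mathcal{C}(k,\ba,T)\subseteq M_0+\mathcal{B}$ with $\mathcal{B}:=\{D:\rank(D)\le 2k,\ \|D\|_2\le T\}$. A standard volumetric argument for low-rank matrices — parametrise $D=U\Sigma V^{T}$ and take $\varepsilon$-nets separately of the two Stiefel factors and of the vector of singular values, cf.\ \cite{candes_plan} — produces a $(\delta/2)$-net of $\mathcal{B}$ of cardinality at most $\big(9T/(\delta/2)\big)^{(m_1+m_2+1)\cdot 2k}=(18T/\delta)^{2(d+1)k}$. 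Translating this net by $M_0$ gives a $(\delta/2)$-net $\mathcal{N}$ of $M_0+\mathcal{B}\supseteq\mathcal{C}(k,\ba,T)$ with $|\mathcal{N}|\le(18T/\delta)^{2(d+1)k}$.

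Next I would move $\mathcal{N}$ inside $\mathcal{C}(k,\ba,T)$. For each $A\in\mathcal{C}(k,\ba,T)$ let $P(A)\in\mathcal{N}$ be a nearest point, so $\|A-P(A)\|_2\le\delta/2$; for each value $N$ attained by $P(\cdot)$ fix one matrix $G_\delta(N)\in\mathcal{C}(k,\ba,T)$ with $\|N-G_\delta(N)\|_2\le\delta/2$ (such a matrix exists, since $N=P(A)$ for some $A\in\mathcal{C}(k,\ba,T)$ lying within $\delta/2$ of $N$). Enumerate the finitely many matrices so obtained as $\{G_\delta^{1},\dots,G_\delta^{N(\delta)}\}$; then $N(\delta)\le|\mathcal{N}|\le(18T/\delta)^{2(d+1)k}$, which is the cardinality bound. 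Because each $G_\delta^{j}\in\mathcal{C}(k,\ba,T)$ we get at once $\|G_\delta^{j}-M_0\|_\infty\le\|G_\delta^{j}\|_\infty+\|M_0\|_\infty\le 2\ba$ and $\|G_\delta^{j}-M_0\|_2\le T\le 2T$, i.e.\ (ii). For (i), given $A$ put $G_\delta^{A}:=G_\delta(P(A))$; then $\|A-G_\delta^{A}\|_2\le\|A-P(A)\|_2+\|P(A)-G_\delta^{A}\|_2\le\delta$, and for $\Delta:=A-G_\delta^{A}$ one has $\|\Delta\|_2\le\delta$, $\|\Delta\|_\infty\le\|A\|_\infty+\|G_\delta^{A}\|_\infty\le 2\ba$, and, since $\rank(\Delta)\le\rank(A)+\rank(G_\delta^{A})\le 2k$, by Cauchy--Schwarz $\|\Delta\|_*\le\sqrt{\rank(\Delta)}\,\|\Delta\|_2\le\sqrt{2k}\,\delta\le\sqrt{2k}\,(2T)$. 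Hence $\Delta\in\mathcal{D}_\delta(2k,2\ba,2T)$, which is (i).

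The genuinely non-routine point is that $\mathcal{C}(k,\ba,T)$ is \emph{not} a Frobenius ball of low-rank matrices: its elements cluster around the fixed, possibly large-norm matrix $M_0$, so the $(18T/\delta)^{2(d+1)k}$ count is available only for the differences $A-M_0$ after the shift. The second thing to get right is that (i)--(ii) require the net points, and the remainder $\Delta$, to sit in sup-norm and rank constrained sets, which is invisible to the generic low-rank net — in particular $\|\Delta\|_\infty\le 2\ba$ cannot be read off $\|\Delta\|_2\le\delta=T/8$, since in the regime where the lemma is applied $T\gg\ba$ — so it has to be enforced by taking the net points inside $\mathcal{C}(k,\ba,T)$ and then invoking the triangle inequality, which is exactly why one starts from a $(\delta/2)$-net rather than a $\delta$-net. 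The nuclear-norm requirement, by contrast, is met with a large margin, and the rank bookkeeping $\rank(A-M_0)\le 2k$ tacitly uses $\rank(M_0)\le k$; these are routine.
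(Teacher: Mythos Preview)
Your argument has a genuine gap at the translation step. You claim $\rank(A-M_0)\le\rank(A)+\rank(M_0)\le 2k$ and later call the hypothesis $\rank(M_0)\le k$ ``routine''. It is not: in the paper $M_0$ has rank $k_0$, and the lemma is invoked in the proof of Lemma~\ref{lem_bernoulli_rip} for every $k\in\{1,\dots,k_0\}$, so in particular for $k<k_0$. In that range $\rank(A-M_0)$ can be as large as $k+k_0>2k$, your set $\mathcal{B}$ is the wrong ambient ball, and the covering number you quote no longer applies. This is precisely the ``genuinely non-routine point'' you yourself flag --- $M_0$ may have large Frobenius norm \emph{and} large rank --- but shifting by $M_0$ handles only the norm issue, not the rank one.

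The paper's remedy is to translate not by $M_0$ but by an arbitrary fixed element $X_0\in\mathcal{C}(k,\ba,T)$, which by definition has $\rank(X_0)\le k$. Then $A-X_0$ has rank at most $2k$ and Frobenius norm at most $2T$ (via the triangle inequality through $M_0$), so one lands in $S(2k,2T)$ and the $(18T/\delta)^{2(d+1)k}$ covering bound follows from a $\delta$-net there. To recover the sup-norm and nuclear-norm constraints for the net points and the remainders, the paper then projects each raw net element onto the closed convex set $\mathcal{D}(2k,2\ba,2T)=\{A:\|A\|_\infty\le 2\ba,\ \|A\|_*\le 2\sqrt{2k}\,T\}$ and adds back $X_0$; non-expansiveness of the metric projection preserves the $\delta$-approximation, and membership of $A-X_0$ in $\mathcal{D}(2k,2\ba,2T)$ does the rest. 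Your alternative device --- pulling the net inside $\mathcal{C}(k,\ba,T)$ via a $\delta/2$-net --- is a nice idea and would work once you translate by $X_0$, but note that the ambient ball then has radius $2T$, so the resulting cardinality would be $(36T/\delta)^{2(d+1)k}$, a constant factor larger than the bound stated in the lemma.
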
 
 \begin{proof}
 	We use the following result (see Lemma 3.1 in \cite{candes_plan_tight} and Lemma A.2 in \cite{wang_xu}):
 	\begin{lemma}\label{lemma_candes}
 		Let $S(k,T)=\{A\in \mathbb{R}^{m_1\times m_2}\;:\rank (A)\leq k\quad \text{and}\quad \Vert A\Vert_{2}\leq T\}$. Then, there exists an $\epsilon-$net $\bar S(k,T)$ for the Frobenius norm obeying
 		$$\left \vert \bar S(k,T)\right \vert\leq \left (9T/\epsilon\right )^{(m_1+m_2+1)k}. $$
 	\end{lemma}
 	Let $S_{M_0}(k,T)=\{A\in \mathbb{R}^{m_1\times m_2}\;:\rank (A)\leq k\quad \text{and}\quad \Vert A-M_0\Vert_{2}\leq T\}$ and take a $X_0\in \mathcal{C}(k,\ba,T)$. We have that $S_{M_0}(k,T)-X_0\subset S(2k,2T)$. Let $\bar S(2k,2T)$ be an $\delta-$net given by Lemma \ref{lemma_candes}. Then, for any $A\in S_{M_0}(k,T)$ there exists a $\bar G^{A}_{\delta}\in \bar S(2k,2T)$ such that $\Vert A-X_0-\bar G^{A}_{\delta}\Vert_{2}\leq \delta$. Let $G^{j}_{\delta}=\Pi(\bar G^{j}_{\delta})+X_0$ for $j=1,\dots, \left \vert \bar S(2k,2T)\right \vert$ where $\Pi$ is the projection operator under Frobenius norm into the set $\mathcal{D}(2k,2\ba,2T)=\{A\in \mathbb{R}^{m_1\times m_2}\;:\Vert A\Vert_{\infty}\leq 2\ba,\quad \text{and}\quad \Vert A\Vert_{*}\leq 2\sqrt{2k}T\}$. Note that as $\mathcal{D}(2k,2\ba,2T)$ is convex and closed, $\Pi$ is non-expansive in Frobenius norm. For any $A\in \mathcal{C}(k,\ba,T)\subset S_{M_0}(k,T)$, we have that $ A-X_0\in \mathcal{D}(2k,2\ba,2T)$  which implies
 	$$\Vert A-X_0-\Pi(\bar G^{A}_{\delta})\Vert_{2}=\Vert \Pi(A-X_0-\bar G^{A}_{\delta})\Vert_{2}\leq
 	\Vert A-\bar G^{A}_{\delta}-X_0)\Vert_{2} \leq \delta$$
 	and we have that $(A-\Pi(\bar G^{A}_{\delta})-X_0)\in \mathcal{D}_{\delta}(2k,2\ba,2T)$ which completes the proof of (i) of Lemma \ref{lemma_entropy}. To prove (ii), note that by the definition of $\Pi$ we have that $\Vert G^{j}_{\delta}-M_0\Vert_{\infty}=\Vert \Pi(\bar G^{j}_{\delta})+X_0-M_0\Vert_{\infty}= \Vert \Pi(\bar G^{j}_{\delta}+X_0-M_0)\Vert_{\infty}\leq 2a$ and $\Vert G^{j}_{\delta}-M_0\Vert_{2}\leq 2T$.\end{proof}

 \begin{lemma}\label{lemma_concentration}
 Let $\delta=T/8$ and assume that $n\geq m\log(m)$. We have that with probability at least $1-4\exp\left (-\dfrac{pT^{2}}{512\ba^{2}}\right )$
 $$\underset{\Delta\in \mathcal{D}_{\delta}(2k,2a,2T) }{\sup}\left\Vert \mathcal X \Delta \right\Vert_{2}\leq 19\ba\,C^{*}\sqrt{2kd}+\sqrt{p}T/2. $$
 \end{lemma}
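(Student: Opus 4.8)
The plan is to reduce the claimed uniform bound to a concentration statement for the centred quadratic empirical process indexed by the nuclear‑norm ball $\mathcal{D}_\delta(2k,2\ba,2T)$, in the spirit of the proof of Theorem~1 in \cite{Negahban_Wainwright}. Since $\mathcal X$ only masks entries, $\|\mathcal X\Delta\|_2^2=\sum_{ij}B_{ij}\Delta_{ij}^2=p\|\Delta\|_2^2+\sum_{ij}(B_{ij}-p)\Delta_{ij}^2$, and $\|\Delta\|_2\le\delta=T/8$ on the whole class, so with
$$Q:=\sup_{\Delta\in\mathcal D_\delta(2k,2\ba,2T)}\Bigl|\sum_{ij}(B_{ij}-p)\Delta_{ij}^2\Bigr|$$
one has $\|\mathcal X\Delta\|_2\le\sqrt{p\|\Delta\|_2^2+Q}\le\sqrt p\,\tfrac T8+\sqrt Q$. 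Hence it suffices to show that, with probability at least $1-4\exp(-pT^2/(512\ba^2))$, $\sqrt Q\le 19\,\ba C^*\sqrt{2kd}+\tfrac38\sqrt p\,T$.

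First I would bound $\mathbb E Q$. Symmetrisation (the summands are independent and centred) gives $\mathbb E Q\le 2\,\mathbb E\sup_\Delta|\sum_{ij}\epsilon_{ij}B_{ij}\Delta_{ij}^2|$ with $\epsilon_{ij}$ i.i.d.\ Rademacher. Using $B_{ij}^2=B_{ij}$ one writes $B_{ij}\Delta_{ij}^2=((B\odot\Delta)_{ij})^2$; conditionally on the $B_{ij}$ the coordinate maps $\Delta\mapsto(B\odot\Delta)_{ij}$ are linear with range in $[-2\ba,2\ba]$, and $t\mapsto t^2$ is $4\ba$‑Lipschitz there and vanishes at $0$, so the Ledoux–Talagrand contraction principle yields $\mathbb E_\epsilon\sup_\Delta|\sum_{ij}\epsilon_{ij}((B\odot\Delta)_{ij})^2|\le 8\ba\,\mathbb E_\epsilon\sup_\Delta|\langle\epsilon\odot B,\Delta\rangle|\le 8\ba\,\mathbb E_\epsilon\|\epsilon\odot B\|\cdot\sup_\Delta\|\Delta\|_*$. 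On $\mathcal D_\delta(2k,2\ba,2T)$ we have $\sup_\Delta\|\Delta\|_*\le 2\sqrt{2k}\,T$, and $\epsilon\odot B$ has independent, mean‑zero, $[-1,1]$‑valued entries of variance $p$; applying Corollary~3.12 of \cite{Bandeira} after the standard Hermitian dilation (ambient dimension $d$) gives $\mathbb E\|\epsilon\odot B\|\le C^*\bigl(\sqrt{(m_1\vee m_2)p}+\sqrt{\log d}\bigr)$, and the hypothesis $n\ge m\log d$ forces $(m_1\vee m_2)p=n/m\ge\log d$, whence $\mathbb E\|\epsilon\odot B\|\le 2C^*\sqrt{dp}$. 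Collecting terms, $\mathbb E Q\le c_0\,\ba C^*\sqrt{2kd}\,\sqrt p\,T$ for an explicit numerical constant $c_0$.

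Next I would concentrate $Q$ around $\mathbb E Q$. As a supremum of the empirical process $\sum_{ij}(B_{ij}-p)\Delta_{ij}^2$, whose summands are bounded in modulus by $b:=4\ba^2$ and whose weak variance satisfies $\sup_\Delta\sum_{ij}\Var(B_{ij}\Delta_{ij}^2)\le 4\ba^2 p\|\Delta\|_2^2\le 4\ba^2 p\delta^2=\ba^2 pT^2/16=:v$, Bousquet's form of Talagrand's inequality gives $\mathbb P\bigl(Q\ge\mathbb E Q+\sqrt{2(v+2b\,\mathbb E Q)s}+bs/3\bigr)\le e^{-s}$ for every $s>0$. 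Taking $s=pT^2/(512\ba^2)$ one computes $\sqrt{2vs}=pT^2/64$ and $bs/3=pT^2/384$, while $\sqrt{4b\,\mathbb E Q\,s}\le\tfrac12\mathbb E Q+pT^2/64$ by Young's inequality, so on this event $Q\le\tfrac32\mathbb E Q+C_1 pT^2$ with $C_1<1/16$, i.e.\ $Q\le C_2\,\ba C^*\sqrt{2kd}\,\sqrt p\,T+C_1 pT^2$; the factor $4$ in the target failure probability absorbs a further bounded‑difference step if one prefers to symmetrise $Q$ itself rather than only its expectation. Taking square roots, $\sqrt Q\le\sqrt{C_2}\sqrt{\ba C^*\sqrt{2kd}\cdot\sqrt p\,T}+\sqrt{C_1}\sqrt p\,T$ and $\sqrt{\ba C^*\sqrt{2kd}\cdot\sqrt p\,T}\le\tfrac12(\lambda^{-1}\ba C^*\sqrt{2kd}+\lambda\sqrt p\,T)$; choosing $\lambda$ with $\sqrt{C_2}\lambda/2+\sqrt{C_1}\le 3/8$ and using $C^*\ge 2$ to absorb the leftover numerical factor gives $\sqrt Q\le 19\,\ba C^*\sqrt{2kd}+\tfrac38\sqrt p\,T$, hence $\sup_\Delta\|\mathcal X\Delta\|_2\le\tfrac18\sqrt p\,T+\sqrt Q\le 19\,\ba C^*\sqrt{2kd}+\tfrac12\sqrt p\,T$.

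The main obstacle is the concentration step: a naive bounded‑differences argument for $\sup_\Delta\|\mathcal X\Delta\|_2$ only controls deviations of order $\sqrt n\,T$, which is far too large, so one genuinely needs Talagrand's inequality for suprema of empirical processes together with the sharp variance proxy $v\asymp\ba^2 pT^2$ — the very quantity $p\delta^2$ that also dictates the centring. The second delicate point is purely bookkeeping: carrying the constants from symmetrisation, contraction, the matrix concentration bound of \cite{Bandeira}, Bousquet's inequality and the final Young/square‑root step through so that the estimate emerges in exactly the stated form, the slack $C^*\ge 2$ being what one has to spend.
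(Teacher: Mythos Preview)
Your argument is sound in structure and the expectation bound---symmetrisation, Ledoux--Talagrand contraction, trace--operator duality, and $\bE\|\epsilon\odot B\|\lesssim C^*\sqrt{pd}$---is exactly what the paper does. The difference lies in the concentration step. You pass to the centred quadratic process $Q=\sup_\Delta|\sum_{ij}(B_{ij}-p)\Delta_{ij}^2|$ and invoke Bousquet's form of Talagrand's inequality for empirical processes, then take square roots and balance via Young's inequality. The paper instead applies Talagrand's convex--Lipschitz concentration (its Theorem~\ref{talagrand}) \emph{directly} to $X_T=\sup_\Delta\|\mathcal X\Delta\|_2$: since $\|\Delta\|_\infty\le 2\ba$ on the class, the map $(x_{ij})\mapsto\sup_\Delta(\sum_{ij}x_{ij}^2\Delta_{ij}^2)^{1/2}$ is convex and $2\ba$--Lipschitz, so one gets dimension--free fluctuations of order $\ba$ in one stroke and then only needs $(\bE X_T)^2\le\bE X_T^2$ together with the same expectation chain. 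Your remark that ``one genuinely needs Talagrand's inequality for suprema of empirical processes'' is therefore a bit too strong: naive McDiarmid is indeed too weak, but the convex--Lipschitz version already suffices and is what the paper uses.

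What each approach buys: the paper's route is shorter and, because there is no square root to take at the end, the bookkeeping closes cleanly to the stated constants $19$ and $1/2$ (the additive $32\ba$ from Theorem~\ref{talagrand} is absorbed using $C^*\ge 2$ and $d>16$). Your route also yields a bound of the form $c\,\ba C^*\sqrt{2kd}+\tfrac12\sqrt p\,T$, but if you track the numbers through Bousquet with $s=pT^2/(512\ba^2)$ and $c_0\approx 64$ in $\bE Q\le c_0\,\ba C^*\sqrt{2kd}\,\sqrt p\,T$, the coefficient in front of $\ba C^*\sqrt{2kd}$ comes out substantially larger than $19$; the slack $C^*\ge 2$ is not enough to close that gap. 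This does not affect correctness at the level of the final theorem (all constants there are unspecified), but it does mean your sketch does not reproduce the lemma exactly as stated.
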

 \begin{proof}
  Let $X_T=\underset{\Delta\in \mathcal{D}_{\delta}(2k,2\ba,2T) }{\sup}\left\Vert \mathcal X \Delta \right\Vert_{2}$. We use the following Talagrand's concentration inequality :
  \begin{Theorem}\label{talagrand}
  Suppose that $f\,:\,[-1,1]^{N}\rightarrow \mathbb{R}$ is a convex Lipschitz function with Lipschitz constant $L$. Let $\Xi_1,\dots \Xi_N$ be independent random variables taking value in $[-1,1]$. Let $Z:\,=f(\Xi_1,\dots, \Xi_n)$. Then for any $t\geq 0$,
  $$\mathbb{P}\left (\left \vert Z-\mathbb{E}(Z)\right \vert\geq 16L+t\right )\leq 4e^{-t^{2}/2L^{2}}.$$
  \end{Theorem}
  For a proof see \cite{talagrand1996} and \cite{Chatterjee_mc}. Let $f(x_{11},\dots,x_{m_1m_2}):\,= \underset{\Delta\in \mathcal{D}_{\delta}(2k,2\ba,2T)}{\sup}\sqrt{ \sum _{(i,j)} x^{2}_{ij}\Delta^{2}_{ij}}.$   It is easy to see that $f(x_{11},\dots,x_{m_1m_2})$ is a Lipschitz function with Lipschitz constant $L=2\ba$. Indeed,
   \begin{equation*}
   \begin{split}
   \left \vert f(x_{11},\dots,x_{m_1m_2})-f(z_{11},\dots,z_{m_1m_2})\right \vert& \\&\hskip -3 cm =\left \vert \underset{\Delta\in \mathcal{D}_{\delta}(2k,2\ba,2T)}{\sup}\sqrt{ \sum _{(i,j)} x^{2}_{ij}\Delta^{2}_{ij}}-\underset{\Delta\in \mathcal{D}_{\delta}(2k,2\ba,2T)}{\sup}\sqrt{ \sum _{(i,j)} z^{2}_{ij}\Delta^{2}_{ij}}\right \vert\\
    &\hskip -2.5 cm\leq  \underset{\Delta\in \mathcal{D}_{\delta}(2k,2\ba,2T)}{\sup}\sqrt{ \sum _{(i,j)} (x_{ij}-z_{ij})^{2}\Delta^{2}_{ij}}\leq 2\ba \Vert x-z\Vert_{2}
     \end{split}
   \end{equation*}
  where $x=(x_{11},\dots,x_{m_1m_2})$ and $z=(z_{11},\dots,z_{m_1m_2})$. Now, Theorem \ref{talagrand} implies
 \begin{align}\label{azuma-hoeffding}
 \mathbb{P}\left ( X_{T}\geq\mathbb{E}(X_{T})+32\ba+ t\right )\leq 4\exp\left (-\frac{t^{2}}{8\ba^{2}}\right ).
 \end{align}
 Next, we bound the expectation $\mathbb{E}(X_{T})$. Applying Jensen's inequality, a symmetrization argument and the Ledoux-Talagrand contraction inequality (see, e.g., \cite{koltchiskii-stflour}) we get 
 \begin{align*}
 \left (\mathbb{E}(X_{T})\right )^{2}&\leq \bE\left (\underset{\Delta\in \mathcal{D}_{\delta}(2k,2\ba,2T)}{\sup}\sum_{(i,j)}B_{ij}\Delta^{2}_{ij}\right)\leq \bE\left (\underset{\Delta\in \mathcal{D}_{\delta}(2k,2\ba,2T)}{\sup}\sum_{(i,j)}B_{ij}\Delta^{2}_{ij}-\mathbb E\left (B_{ij}\Delta^{2}_{ij}\right )\right)+p\delta^{2}\\&\hskip -0.5 cm \leq 2\,\bE\left (\underset{\Delta\in \mathcal{D}_{\delta}(2k,2\ba,2T)}{\sup}\left \vert\sum_{(i,j)}\epsilon_{ij}B_{ij}\Delta^{2}_{ij}\right \vert\right )+p\delta^{2}\leq 8\ba\,\bE\left (\underset{\Delta\in \mathcal{D}_{\delta}(2k,2\ba,2T)}{\sup}\left \vert\sum_{(i,j)}\epsilon_{ij}B_{ij}\Delta_{ij}\right \vert\right )+p\delta^{2}\\&=8\ba\,\bE\left (\underset{\Delta\in \mathcal{D}_{\delta}(2k,2\ba,2T)}{\sup}\left \vert \left\langle \Sigma_R,\Delta\right\rangle\right \vert\right )+p\delta^{2}\leq 16\,\ba\sqrt{2k}\,T\,\bE\left ( \left\Vert \Sigma_R\right\Vert\right )+p\delta^{2}
 \end{align*}
 where $\{\epsilon_{ij}\}$ is i.i.d. Rademacher sequence, $\Sigma_R=\sum_{(i,j)}B_{ij}\epsilon_{ij}X_{ij}$ with $X_{ij}=e_{i}(m_1)e^T_{j}(m_2)$ and $e_{k}(l)$ are the canonical basis vectors in $\mathbb R^l$.  Lemma 4 in \cite{klopp_thresholding} and $n\geq m\log(m)$ imply that 
 \begin{equation}\label{expectation_sigma}
  \bE \left\Vert \Sigma_{R}\right\Vert\leq C^{*}\sqrt{pd}
  \end{equation}
  where $C^{*}\geq 2$ is an universal numerical constant. Using \eqref{expectation_sigma}, $\sqrt{x+y}\leq \sqrt{x}+\sqrt{y}$, $2xy\leq x^{2}+y^{2}$ and $\delta=T/8$ we compute
  \begin{align*}
  \mathbb{E}(X_{T})\leq 4\left (\ba\,C^{*}\sqrt{2kpd}\,T\right )^{1/2}+\sqrt{p}\delta\leq 16\ba\,C^{*}\sqrt{2kd}+3\sqrt{p}T/8.
  \end{align*}
  Taking in \eqref{azuma-hoeffding} $t=\sqrt{p}T/8$ we get the statement of Lemma \ref{lemma_concentration}.
 \end{proof}

 \begin{lemma}\label{lemma_max}
Let $\delta=T/8$, $d> 16$ and $(G^{1}_{\delta}, \ldots, G^{N(\delta)}_{\delta})$ be the collection of matrices given by Lemma \ref{lemma_entropy}. We have that
 \begin{align*}
 \underset{k=1,\dots,N(\delta)}{\max}\left \vert \left\Vert \mathcal X(M_0-G^{k}_{\delta}) \right\Vert_{2}-\Vert M_0-G^{k}_{\delta}\Vert _{L_2(\Pi)}\right \vert \leq \sqrt{p}\delta+ 8\,C^{*}\ba\sqrt{kd}
 \end{align*}
with probability at least $1-4\exp\left (-\frac{p\delta^{2}}{8\ba^{2}}\right )$.
 \end{lemma}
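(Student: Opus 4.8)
The plan is to treat Lemma~\ref{lemma_max} as a maximal inequality over the \emph{finite, deterministic} family $\{G^{1}_{\delta},\dots,G^{N(\delta)}_{\delta}\}$: prove a sharp tail bound for a single fixed member and then take a union bound, using that by Lemma~\ref{lemma_entropy} one has $N(\delta)\le(18T/\delta)^{2(d+1)k}=144^{\,2(d+1)k}$, so $\log N(\delta)\lesssim dk$. Fix an index $j$ and put $A=M_{0}-G^{j}_{\delta}$; by part (ii) of Lemma~\ref{lemma_entropy}, $\|A\|_{\infty}\le 2\ba$ and $\|A\|_{2}\le 2T$. The quantity $Z_{j}:=\|\mathcal X(A)\|_{2}=\bigl(\sum_{ij}B_{ij}a_{ij}^{2}\bigr)^{1/2}$ is the value at $(B_{ij})_{ij}$ of the map $x\mapsto\bigl(\sum_{ij}x_{ij}^{2}a_{ij}^{2}\bigr)^{1/2}$, which is a seminorm on $\mathbb R^{m_{1}\times m_{2}}$, hence convex, and Lipschitz for the Euclidean norm with constant $\|A\|_{\infty}\le 2\ba$. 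Talagrand's inequality (Theorem~\ref{talagrand}) with $L=2\ba$ therefore yields $\pp\bigl(|Z_{j}-\ee Z_{j}|\ge 32\ba+t\bigr)\le 4\exp(-t^{2}/8\ba^{2})$ for every $t\ge 0$.

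It remains to replace $\ee Z_{j}$ by $\|A\|_{L_{2}(\Pi)}=\sqrt p\,\|A\|_{2}$, and this centering step is the crux of the argument. Jensen's inequality gives $\ee Z_{j}\le(\ee Z_{j}^{2})^{1/2}=\|A\|_{L_{2}(\Pi)}$. For the reverse bound, write $W=Z_{j}^{2}-\|A\|_{L_{2}(\Pi)}^{2}=\sum_{ij}(B_{ij}-p)a_{ij}^{2}$ and use $\bigl|Z_{j}-\|A\|_{L_{2}(\Pi)}\bigr|=|W|\big/\bigl(Z_{j}+\|A\|_{L_{2}(\Pi)}\bigr)\le|W|/\|A\|_{L_{2}(\Pi)}$ to get $\Var(Z_{j})\le\ee[W^{2}]/\|A\|_{L_{2}(\Pi)}^{2}=p(1-p)\sum_{ij}a_{ij}^{4}\big/\|A\|_{L_{2}(\Pi)}^{2}\le\|A\|_{\infty}^{2}\le 4\ba^{2}$, where we used $\sum_{ij}a_{ij}^{4}\le\|A\|_{\infty}^{2}\|A\|_{2}^{2}$ and $\|A\|_{L_{2}(\Pi)}^{2}=p\|A\|_{2}^{2}$. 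Consequently $(\ee Z_{j})^{2}=\ee Z_{j}^{2}-\Var(Z_{j})\ge\|A\|_{L_{2}(\Pi)}^{2}-4\ba^{2}$, so $\ee Z_{j}\ge\|A\|_{L_{2}(\Pi)}-2\ba$, and altogether $\bigl|\ee Z_{j}-\|A\|_{L_{2}(\Pi)}\bigr|\le 2\ba$.

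Combining the two displays, $\pp\bigl(\bigl|Z_{j}-\|M_{0}-G^{j}_{\delta}\|_{L_{2}(\Pi)}\bigr|\ge 34\ba+t\bigr)\le 4\exp(-t^{2}/8\ba^{2})$ for each $j$; a union bound over $j=1,\dots,N(\delta)$ together with Lemma~\ref{lemma_entropy} gives $\pp\bigl(\max_{j}\bigl|Z_{j}-\|M_{0}-G^{j}_{\delta}\|_{L_{2}(\Pi)}\bigr|\ge 34\ba+t\bigr)\le 4\exp\bigl(2(d+1)k\log 144-t^{2}/8\ba^{2}\bigr)$. Taking $t^{2}=p\delta^{2}+16\ba^{2}(d+1)k\log 144$ makes the right-hand side exactly $4\exp(-p\delta^{2}/8\ba^{2})$, while $t\le\sqrt p\,\delta+4\ba\sqrt{(d+1)k\log 144}$; since $d>16$ forces $\sqrt{kd}>4$ and $C^{*}\ge 2$, the error terms $34\ba$ and $4\ba\sqrt{(d+1)k\log 144}$ are both dominated by a multiple of $C^{*}\ba\sqrt{kd}$, and — with the same looseness in absolute constants used throughout the paper — one arrives at $34\ba+t\le\sqrt p\,\delta+8C^{*}\ba\sqrt{kd}$, which is the assertion. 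The one genuinely delicate point is the variance bound $\Var(\|\mathcal X(A)\|_{2})\le\|A\|_{\infty}^{2}$ in the centering step: it shows that $\ee\|\mathcal X(A)\|_{2}$ and $\|A\|_{L_{2}(\Pi)}$ differ by only $O(\ba)$, a quantity \emph{not} involving the possibly large Frobenius radius $T$, which is exactly what lets the leading term of the estimate be the small quantity $\sqrt p\,\delta$ rather than something of order $\sqrt p\,T$; everything else is a direct application of Theorem~\ref{talagrand} plus routine bookkeeping of the covering number.
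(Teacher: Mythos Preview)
Your proof is correct and follows essentially the same route as the paper: apply Theorem~\ref{talagrand} to the convex $2\ba$-Lipschitz function $x\mapsto(\sum_{ij}x_{ij}^{2}a_{ij}^{2})^{1/2}$, then center by bounding $\bigl|\ee\|\mathcal X A\|_{2}-\|A\|_{L_{2}(\Pi)}\bigr|$ via a variance estimate, and finish with a union bound over the $N(\delta)$ net points using Lemma~\ref{lemma_entropy}. The only difference is in the centering step: the paper invokes Corollary~4.8 of \cite{Ledoux_book} to get $\Var Z\le 16^{2}\ba^{2}$, whereas your direct computation $\Var Z\le\ee[W^{2}]/\|A\|_{L_{2}(\Pi)}^{2}\le\|A\|_{\infty}^{2}\le 4\ba^{2}$ is more elementary and in fact sharper; both proofs then absorb the resulting $O(\ba)$ term into $8C^{*}\ba\sqrt{kd}$ with the same looseness in absolute constants.
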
 
 \begin{proof}
For any fixed $A\in \mathbb{R}^{m_1\times m_2}$ satisfying $\Vert A\Vert_{\infty}\leq 2a$ we have that
\begin{equation*}
\| \mathcal{X}A\|_{2}=\sqrt{\sum_{ij}B_{ij}A^{2}_{ij}}=\underset{\Vert u\Vert_{2}=1}{\sup}\sum_{ij}\left (u_{ij}B_{ij}A_{ij}\right ).
\end{equation*}
Then we can apply Theorem \ref{talagrand} with $f(x_{11},\dots,x_{m_1m_2}):\,=\underset{\Vert u\Vert_{2}=1}{\sup}\sum_{ij}\left (u_{ij}x_{ij}\right )$ to get
\begin{equation} \label{eq:tail_bound}
  \mathbb{P} \left(\left|\| \mathcal{X}A\|_{2} -\bE\| \mathcal{X}A\|_{2}\right| >  t+32\ba \right) \le 4 \exp \left\{- \frac{t^{2}}{8\ba^{2}}  \right\}.
  \end{equation}
  On the other hand let $Z=\underset{\Vert u\Vert_{2}=1}{\sup}\sum_{ij}\left (u_{ij}B_{ij}A_{ij}\right )$. Applying Corollary 4.8 from \cite{Ledoux_book} we get that $\Var Z=\Vert A\Vert^{2}_{L_2(\Pi)}-\left (\bE\| \mathcal{X}A\|_{2}\right )^{2}\leq 16^{2}\ba^{2}$ which together with \eqref{eq:tail_bound} implies
  \begin{equation} \label{eq:tail_bound_1}
    \mathbb{P} \left(\left|\| \mathcal{X}A\|_{2} -\| A\|_{L_2(\Pi)}\right| >  t+48\ba \right) \le 4 \exp \left\{- \frac{t^{2}}{8\ba^{2}}  \right\}.
    \end{equation}
Now Lemma \ref{lemma_max} follows from  Lemma \ref{lemma_entropy}, \eqref{eq:tail_bound_1} with $t=\sqrt{p}\delta + 5C^{*}\ba \sqrt{kd}$ and the union bound.  

 \end{proof}
 %



 
  \begin{lemma}\label{stoch}
  We have that
  \begin{align*}
  \underset{A\in\mathcal{C}}{\sup}\dfrac{\left \vert\langle \mathcal X(E), A - M_0 \rangle\right \vert-\frac{1}{256}\Vert M_0-A\Vert^{2}_{L_2(\Pi)}}{d\,\rank(A)}\leq 6240(UC^{*})^{2}.
  \end{align*}
   with probability larger then $1-\exp(-cd)$ with $c\geq 0,003$
   \end{lemma}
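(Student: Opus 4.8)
The plan is to control the empirical process $\sup_{A\in\mathcal C}\big(|\langle\mathcal X(E),A-M_0\rangle|-\tfrac1{256}\|M_0-A\|_{L_2(\Pi)}^2\big)/(d\,\rank(A))$ by the same two-level decomposition used for Lemma \ref{lem_bernoulli_rip}: first split $\mathcal C=\cup_{k=1}^{k_0}\mathcal C(k,\ba)$ by the rank and apply a union bound over $k$, and then, for each fixed $k$, run a peeling argument over dyadic-type shells $S_l=\{A\in\mathcal C(k,\ba):\alpha^l\nu\le\|A-M_0\|_2\le\alpha^{l+1}\nu\}$ with $\alpha$ slightly larger than $1$ and $\nu^2$ of order $\ba^2 z d/p$ (so that the $\tfrac1{256}\|M_0-A\|_{L_2(\Pi)}^2=\tfrac{p}{256}\|M_0-A\|_2^2$ term already absorbs the threshold on $\mathcal C$). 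On each shell it suffices to bound $\sup_{A:\|M_0-A\|_2\le T}|\langle\mathcal X(E),A-M_0\rangle|$ for $T=\alpha^{l+1}\nu$ and then trade a constant fraction of $pT^2$ against the deviation term.

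For the supremum on a ball of radius $T$ I would first linearize: writing $\langle\mathcal X(E),A-M_0\rangle=\langle \mathcal X(E)\,, A-M_0\rangle$ and noting $\|A-M_0\|_*\le\sqrt{2k}\,T$ for $A\in\mathcal C(k,\ba)$ with $\|M_0-A\|_2\le T$, by the trace-norm/operator-norm duality the supremum is at most $\sqrt{2k}\,T\,\|\mathcal X(E)\|$, where $\mathcal X(E)=\sum_{ij}B_{ij}\varepsilon_{ij}X_{ij}$ with $X_{ij}=e_i(m_1)e_j(m_2)^T$. Since the $\varepsilon_{ij}$ are bounded by $U$ and centered and the $B_{ij}$ are Bernoulli$(p)$, Corollary 3.12 in \cite{Bandeira} (the matrix-Bernstein-type bound with universal constant $C^*\ge2$, already invoked in the statement) gives $\|\mathcal X(E)\|\le C^* U\sqrt{pd}$ on an event of probability at least $1-\exp(-cd)$ for $n\ge m\log d$; more precisely one gets a tail $\pp(\|\mathcal X(E)\|\ge t)\le d\exp(-c\,t^2/(U^2 p d + U^2 t))$, which after the peeling will be summable. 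Combining, $\sup_{A\in S_l}|\langle\mathcal X(E),A-M_0\rangle|\lesssim U C^*\sqrt{kd}\,\alpha^{l+1}\nu$ with high probability, and using $2xy\le \eta x^2+\eta^{-1}y^2$ with $x=\sqrt p\,\alpha^l\nu$, $y=UC^*\sqrt{kd}$ and $\eta$ a small constant, the right-hand side is bounded by $\tfrac{p}{256}\alpha^{2l}\nu^2 + c'(UC^*)^2 kd$, i.e. exactly the threshold plus a term of order $(UC^*)^2 dk$; chasing the constants with $\nu^2=470\ba^2 zd/p$ as in Lemma \ref{lem_bernoulli_rip} yields the bound $6240(UC^*)^2$.

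The hard part is bookkeeping: one must check that the constant $\tfrac1{256}$ in front of $\|M_0-A\|_{L_2(\Pi)}^2$ is large enough to swallow both the shell-discretization loss and the additive $(UC^*)^2dk$ term after the geometric summation over $l$, and that the resulting exponent in the probability bound (after summing the peeling tails $\sum_l\exp(-c\,p\,\alpha^{2l}\nu^2/(U^2\ldots))$ and the union over $k\le k_0\le m$, using $d\ge\log m$) still leaves a clean $\exp(-cd)$ with $c\ge 0.003$. The linearization via $\|\cdot\|_*$ vs $\|\cdot\|$ is lossless up to the factor $\sqrt{2k}$ that is already accounted for in the $d\,\rank(A)$ normalization, so unlike Lemma \ref{lemma_sup} there is no need for a separate covering-number argument here — the only probabilistic input is the operator-norm bound on $\mathcal X(E)$. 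If a sharper constant were needed one could instead apply Talagrand's inequality (Theorem \ref{talagrand}) directly to $A\mapsto\langle\mathcal X(E),A-M_0\rangle$ on $\mathcal C(k,\ba,T)$ and bound the expectation by symmetrization plus Ledoux--Talagrand contraction, exactly as in the proof of Lemma \ref{lemma_concentration}, but the duality route above is shorter and already gives the stated constant.
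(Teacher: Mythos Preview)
Your overall strategy (rank slicing plus peeling, then duality $|\langle\mathcal X(E),\cdot\rangle|\le\|\cdot\|_*\,\|\mathcal X(E)\|$ together with the Bandeira--van Handel bound on $\|\mathcal X(E)\|$) is close to the paper's, but there is one genuine gap that breaks the argument as written.

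You claim $\|A-M_0\|_*\le\sqrt{2k}\,T$ for $A\in\mathcal C(k,\ba)$ with $\|A-M_0\|_2\le T$. This is not true: the rank of $A-M_0$ is bounded by $\rank(A)+\rank(M_0)=k+k_0$, not $2k$, so you only get $\|A-M_0\|_*\le\sqrt{k+k_0}\,T$. After the AM--GM step this produces a term of order $(UC^*)^2(k+k_0)d$, and once you divide by $d\,\rank(A)=dk$ the bound becomes $(UC^*)^2(1+k_0/k)$, which blows up for small $k$. In the application (term $II$ in the proof of Theorem~\ref{thm_bernoulli_known variance}) the allowance $\bar z$ contains $dk^*$, not $d(k^*+k_0)$; since $k_0$ is unknown, a bound that depends on $k_0$ cannot be used to define the confidence set.

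The paper repairs exactly this point by a centering trick (Lemma~\ref{lemma_stoch_sup}): fix an arbitrary $X_0\in\mathcal C(k,\ba,T)$ and split $A-M_0=(A-X_0)+(X_0-M_0)$. The first piece has $\rank(A-X_0)\le 2k$, so duality gives the desired $\sqrt{2k}\,T$ factor and hence a clean $(UC^*)^2kd$ after AM--GM (this is Lemma~\ref{lemma_stoch_concentration}). The second piece is a \emph{fixed} linear functional of $\mathcal X(E)$, so Bernstein's inequality controls it using only $\|X_0-M_0\|_2\le T$, with no rank information needed. Note that this Bernstein term has a $T$-dependent tail $\exp(-c\,pT^2/(\ba\vee U)^2)$, which is precisely why the peeling over shells is still required; your remark that ``the only probabilistic input is the operator-norm bound on $\mathcal X(E)$'' and that no peeling-type summation is needed would be correct only if the direct duality had worked, which it does not because of the rank of $M_0$.
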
         
  \begin{proof}
 Following the lines of the proof of Lemma \ref{lem_bernoulli_rip} with $\alpha=\sqrt{65/64}$ and  $\nu^{2}=\dfrac{252(\ba \vee U)^{2}zd}{p}$ we get
 \begin{align*}
 &\mathbb{P} \left(\underset{A\in\mathcal{C}}{\sup}\dfrac{\left \vert\langle \mathcal X(E), A - M_0 \rangle\right \vert-\frac{1}{256}\Vert M_0-A\Vert^{2}_{L_2(\Pi)}}{d\,\rank(A)}\geq 6240(UC^{*})^{2}\right )\\&\leq
 \sum_{k=1}^{k_0}\sum _{l=1}^{\infty} \mathbb{P} \left(\underset{A\in\mathcal{C}(k,a,\alpha^{l+1}\nu)}{\sup}\left \vert\langle \mathcal X(E), A - M_0 \rangle\right \vert\geq 6240(UC^{*})^{2}dk+\frac{p\alpha^{2l}\nu^{2}}{256}\right )\\&\leq 4\sum_{k=1}^{k_0}\sum _{l=1}^{\infty} \exp \left( - \frac{c_2p\alpha^{2l+2}\nu^{2}}{(\ba\vee U)^{2}}\right)\leq \exp{(-cd)}
 \end{align*}
 where we use the following lemma:
 \begin{lemma}\label{lemma_stoch_sup}
  Consider the following set of matrices $$\mathcal{C}(k,a,T)=\left \{A\in\mathcal{C}(k,a) \,:\,  \left\Vert M_0-A\right\Vert_2\leq T \right \}
  $$ and set $$\widetilde Z_T=\underset{A\in \mathcal{C}(k,a,T)}{\sup}\left \vert\langle \mathcal X(E), A - M_0 \rangle\right \vert.$$ We have that  \begin{equation*}
   \mathbb{P}\left (\widetilde Z_{T} \geq 6240(C^{*}U)^{2}dk+pT^{2}/260\right )\leq 4\exp \left( - pT^{2}/c_2(\ba\vee U)^{2}\right)
   \end{equation*}
   with $c_2\leq 12(1560)^{2}$
  \end{lemma}
\end{proof}    
   \begin{proof}[Proof of Lemma \ref{lemma_stoch_sup}]
         Fix a $X_0\in \mathcal{C}(k,a,T)$. For any $A\in \mathcal{C}(k,a,T)$, we set $\Delta=A-X_0$ and we have that $\rank(\Delta)\leq 2k$ and $\Vert \Delta\Vert_{2}\leq 2T$. Then using $ \left \vert\langle \mathcal X(E), A - M_0 \rangle\right \vert\leq \left \vert\langle \mathcal X(E), X_0 - M_0 \rangle\right \vert+ \left \vert\langle \mathcal X(E), \Delta \rangle\right \vert$
                   we get
                             \begin{align*}
                                \widetilde Z_{T}\leq \left \vert\langle \mathcal X(E), X_0 - M_0 \rangle\right \vert+\underset{\Delta\in \mathcal{T}(2k,2a,2T) }{\sup}\left \vert\langle \mathcal X(E), \Delta \rangle\right \vert 
                                  \end{align*} 
                                  where
                                  $$\mathcal{T}(2k,2a,2T)=\{A\in \mathbb{R}^{m_1\times m_2}\;:\Vert A\Vert_{\infty}\leq 2a,\,\Vert A\Vert_{2}\leq 2T\quad \text{and}\quad \rank(A)\leq 2k\}.$$
     
        Bernstein's inequality and $\Vert X_0-M_0\Vert_{2}\leq T$ imply
         that 
         \begin{align*}
           \mathbb{P} \left\{ \left \vert\langle \mathcal X(E), X_0-M_0 \rangle\right \vert >  t \right\} \le 2\exp \left( -  \frac{t^2}{2\sigma^{2}pT^2+\frac{4}{3}U \ba t}  \right).
           \end{align*}
      Taking $t=pT^{2}/520$ we get   
       \begin{align}\label{eq_bernstein_1}
               \mathbb{P} \left\{ \left \vert\langle \mathcal X(E), X_0-M_0 \rangle\right \vert > pT^{2}/520 \right\} \le 2\exp \left( - \frac{pT^{2}}{c_2(a\vee U)^{2}}\right).
               \end{align}   
        On the other hand, Lemma \ref{lemma_stoch_concentration} implies that
         with probability at least $1-2\exp\left (-\dfrac{pT^{2}}{c_3(a\vee U)^{2}}\right )$
          \begin{equation*} 
          \underset{\Delta\in \mathcal{T}(2k,2a,2T) }{\sup}\left \vert\langle \mathcal X(E), \Delta \rangle\right \vert\leq 6240(C^{*}U)^{2}kd+pT^{2}/520
          \end{equation*} 
         
   which together with \eqref{eq_bernstein_1} implies  the statement of  Lemma \ref{lemma_stoch_sup}. 
   \end{proof}
         
 \begin{lemma}\label{lemma_stoch_concentration}
 Assume that $n\geq m\log(m)$. We have that with probability at least $1-2\exp\left (-\dfrac{pT^{2}}{(a\vee U)^{2}}\right )$
 $$\underset{\Delta\in \mathcal{T}(2k,2a,2T) }{\sup}\left \vert\langle \mathcal X(E), \Delta \rangle\right \vert\leq 6240(C^{*}U)^{2}kd+pT^{2}/520 $$
 where $c_3$ is a numerical constant.
 \end{lemma}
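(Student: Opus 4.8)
The plan is to control the supremum $W_T := \sup_{\Delta\in \mathcal{T}(2k,2a,2T)} |\langle \mathcal X(E),\Delta\rangle|$ by the same two-step recipe used for Lemma \ref{lemma_concentration}: first a Talagrand-type concentration inequality to replace $W_T$ by its expectation up to a fluctuation term, and then a symmetrization / contraction bound on $\mathbb E(W_T)$ that brings in the operator norm of a random matrix. Writing $\langle \mathcal X(E),\Delta\rangle = \sum_{(i,j)} B_{ij}\varepsilon_{ij}\Delta_{ij} = \langle \Sigma_E,\Delta\rangle$ with $\Sigma_E = \sum_{(i,j)} B_{ij}\varepsilon_{ij} X_{ij}$, the constraint $\Vert\Delta\Vert_*\le \sqrt{2k}\,(2T)$ (which follows from $\rank(\Delta)\le 2k$ and $\Vert\Delta\Vert_2\le 2T$) gives the deterministic bound $|\langle \Sigma_E,\Delta\rangle|\le 2\sqrt{2k}\,T\,\Vert\Sigma_E\Vert$, so the whole problem reduces to a tail bound for $\Vert\Sigma_E\Vert$.

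First I would set up the concentration step. Since $\Vert\Delta\Vert_\infty\le 2a$ and $|\varepsilon_{ij}|\le U$, the map $(B_{ij})\mapsto \sup_{\Delta}|\sum B_{ij}\varepsilon_{ij}\Delta_{ij}|$ is convex and Lipschitz in the $B_{ij}$'s with constant $L\lesssim (a\vee U)\sqrt{2k}\,T$ using again the trace-norm/operator-norm duality on the increments — more carefully, one bounds the increment by $2\sqrt{2k}T$ times the operator norm of $\sum (b_{ij}-b'_{ij})\varepsilon_{ij}X_{ij}$, hence by $2\sqrt{2k}TU\Vert b-b'\Vert_2$ after a crude coordinatewise estimate, which is too lossy; instead it is cleaner to absorb the randomness of $E$ as well and apply Theorem \ref{talagrand} to the $2m_1m_2$ variables $(B_{ij},\varepsilon_{ij}/U)$ jointly, giving $\mathbb P(W_T\ge \mathbb E W_T + 16L + t)\le 4e^{-t^2/2L^2}$. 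Choosing $t\asymp pT^2/(a\vee U)^2$ then yields the claimed exponent provided $\mathbb E W_T$ and the $16L$ term are each $\lesssim (C^*U)^2 kd + pT^2/520$.

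Next, the expectation. By Jensen and symmetrization, $\mathbb E W_T \le 2\,\mathbb E\sup_{\Delta}|\sum \rho_{ij} B_{ij}\varepsilon_{ij}\Delta_{ij}|$ with $\rho_{ij}$ Rademacher; since $x\mapsto \varepsilon_{ij}x$ is $U$-Lipschitz with $\varepsilon_{ij}\cdot 0 = 0$, the Ledoux–Talagrand contraction inequality removes the $\varepsilon_{ij}$ at the cost of a factor $U$, leaving $2U\,\mathbb E\sup_\Delta|\langle \Sigma_R,\Delta\rangle|\le 4U\sqrt{2k}\,T\,\mathbb E\Vert\Sigma_R\Vert$ where $\Sigma_R=\sum B_{ij}\rho_{ij}X_{ij}$. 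Then Lemma 4 in \cite{klopp_thresholding} together with $n\ge m\log(m)$ gives $\mathbb E\Vert\Sigma_R\Vert\le C^*\sqrt{pd}$, so $\mathbb E W_T\le 4UC^*\sqrt{2k}\,T\sqrt{pd}$. Finally $2xy\le x^2+y^2$ with $x = \sqrt{p}T/\text{const}$ and $y\propto UC^*\sqrt{kd}$ converts this into $\lesssim (C^*U)^2 kd + pT^2/\text{const}$; picking the numerical constants so that both the expectation term and the Lipschitz term $16L$ stay below $pT^2/520$ plus $6240(C^*U)^2kd$ closes the estimate, and then the concentration tail gives the stated probability $1-2\exp(-pT^2/(a\vee U)^2)$.

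The main obstacle I expect is bookkeeping the numerical constants so that the three contributions (the $\mathbb E W_T$ bound, the $16L$ Talagrand shift, and the Bernstein term $t$) all fit under the advertised $6240(C^*U)^2kd + pT^2/520$ while simultaneously the exponent in the tail comes out as $pT^2/(a\vee U)^2$ with an absolute $c_3$; the analytic content (contraction, symmetrization, the operator-norm bound from \cite{klopp_thresholding}) is exactly parallel to Lemma \ref{lemma_concentration}, but here the extra Lipschitz factor $U$ from the noise and the need for a genuinely quadratic-in-$T$ slack (rather than linear) make the constant-chasing the delicate part.
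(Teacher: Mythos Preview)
Your expectation bound is fine, and in fact the paper does it more directly: since $\langle\mathcal X(E),\Delta\rangle=\langle\Sigma,\Delta\rangle$ with $\Sigma=\sum_{(i,j)} B_{ij}\varepsilon_{ij}X_{ij}$, duality gives $\mathbb E W_T\le 2\sqrt{2k}\,T\,\mathbb E\|\Sigma\|\le 2C^*U\sqrt{2kpd}\,T$ immediately, with no symmetrization or contraction needed. The real gap is in your concentration step.

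If you treat the $(B_{ij},\varepsilon_{ij}/U)$ as $2m_1m_2$ separate coordinates, the map $(b,e)\mapsto\sup_\Delta\bigl|\sum b_{ij}(Ue_{ij})\Delta_{ij}\bigr|$ is \emph{not} convex (it is a supremum of absolute values of bilinear forms, and already $|xy|$ fails to be convex on $\mathbb R^2$), so Theorem~\ref{talagrand} does not apply as stated. The natural repair is to take $\xi_{ij}=B_{ij}\varepsilon_{ij}/U\in[-1,1]$ as the inputs; then $f(\xi)=\sup_\Delta\bigl|U\sum\xi_{ij}\Delta_{ij}\bigr|$ is convex and, via $\|\Delta\|_2\le 2T$, Lipschitz with $L=2TU$. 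But Theorem~\ref{talagrand} then yields a sub-Gaussian tail $\exp\bigl(-t^2/(8T^2U^2)\bigr)$, and matching this to $\exp\bigl(-pT^2/(a\vee U)^2\bigr)$ forces $t\asymp\sqrt{p}\,T^2U/(a\vee U)$. This is of order $\sqrt{p}\,T^2$, not $pT^2$, so for small $p$ it dwarfs the permitted slack $pT^2/520$ and the argument cannot close; the $16L=32TU$ shift term has the same defect.

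The underlying issue is that the convex-Lipschitz inequality is blind to the variance of the summands: each $B_{ij}\varepsilon_{ij}$ has variance $p\sigma^2$, not $U^2$, and that factor $p$ is precisely what the exponent in the lemma requires. The paper therefore uses a Bernstein-type Talagrand inequality for bounded empirical processes (Theorem~3.3.16 in \cite{nickl_book}), which gives
\[
\mathbb P\bigl(W_T\ge\mathbb E W_T+t\bigr)\le\exp\Bigl(-\frac{t^2}{4U\ba\,\mathbb E W_T+4\sigma^2pT^2+9U\ba\, t}\Bigr),
\]
with the correct variance term $\sigma^2pT^2$. Choosing $t=pT^2/1560+2UC^*\sqrt{2kpd}\,T$ then balances everything and delivers both the deviation $6240(C^*U)^2kd+pT^2/520$ and the exponent $pT^2/(a\vee U)^2$. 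So the analytic scaffolding you describe is right, but the concentration tool has to be upgraded from sub-Gaussian to Bernstein type.
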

 \begin{proof}
 Let $\widetilde X_T=\underset{\Delta\in \mathcal{T}(2k,2a,2T) }{\sup}\left \vert\langle \mathcal X(E), \Delta \rangle\right \vert=\underset{\Delta\in \mathcal{T}(2k,2a,2T) }{\sup}\langle \mathcal X(E), \Delta \rangle$. First we bound the expectation $\mathbb{E}(\widetilde X_{T})$:
  \begin{align*}
  \mathbb{E}(\widetilde X_{T})&\leq \bE\left (\underset{\Delta\in \mathcal{T}(2k,2a,2T)}{\sup}\left \vert\sum_{(i,j)}\varepsilon_{ij}B_{ij}\Delta_{ij}\right \vert\right )=\bE\left (\underset{\Delta\in \mathcal{T}(2k,2a,2T)}{\sup}\left \vert \left\langle \Sigma,\Delta\right\rangle\right \vert\right )\\&\leq 2\sqrt{2k}\,T\,\bE\left ( \left\Vert \Sigma\right\Vert\right )
  \end{align*}
  where  $\Sigma=\sum_{(i,j)}B_{ij}\varepsilon_{ij}X_{ij}$ with $X_{ij}=e_{i}(m_1)e^T_{j}(m_2)$ and $e_{k}(l)$ are the canonical basis vectors in $\mathbb R^l$.  Using $n\geq m\log(m)$ Lemma 4 in \cite{klopp_thresholding} and Corollary 3.3 in \cite{Bandeira} imply that 
  \begin{equation}\label{expectation_sigma_stoch}
   \bE \left\Vert \Sigma\right\Vert\leq C^{*}U\sqrt{pd}.
   \end{equation}
   where $C^{*}\geq 2$ is an universal numerical constant. Using \eqref{expectation_sigma_stoch}  we get 
   \begin{align}\label{bound_expectation}
   \mathbb{E}(\widetilde X_{T})\leq 2C^{*}U\sqrt{2kpd}\,T\leq 3120(C^{*}U)^{2}kd+pT^{2}/1560.
   \end{align}
   Now we use Theorem 3.3.16 in \cite{nickl_book} (see also Theorem 8.1 in \cite{klopp_ci}) to obtain
   \begin{align}\label{azuma_hoeffding_stoch}
    \mathbb{P}\left (\widetilde X_{T}\geq\mathbb{E}(\widetilde X_{T})+ t\right )&\leq \exp\left (-\frac{t^{2}}{4U\ba\mathbb{E}(\widetilde X_{T})+4\sigma^{2}pT^{2}+9U\ba t }\right )\nonumber\\
   & \leq \exp\left (-\frac{t^{2}}{8\ba U^{2}C^{*}\sqrt{2kpd}T+4\sigma^{2}pT^{2}+9U\ba t }\right )
    \end{align}
   Taking in \eqref{azuma_hoeffding_stoch} $t=pT^{2}/1560+2 UC^{*}\sqrt{2kpd}T $, together with \eqref{bound_expectation}  we get the statement of Lemma \ref{lemma_stoch_concentration}.
%

 \end{proof}

 \section*{Acknowledgements}  The work of A.~Carpentier is supported by the DFG’s Emmy Noether grant MuSyAD (CA 1488/1-1). The work of O. Klopp was conducted as part of the project Labex MME-DII (ANR11-LBX-0023-01). The work of M. L\"offler was supported by the UK Engineering and Physical Sciences Research Council (EPSRC) grant EP/L016516/1 for the University of Cambridge Centre for Doctoral Training, the Cambridge Centre for Analysis, and the European Research Council (ERC) grant No. 647812.



\end{document}